	
\documentclass[11pt, a4paper]{article}
	
\setlength{\topmargin}{1cm}
\setlength{\leftmargin}{2cm}
\setlength{\rightmargin}{0cm}
 	
\usepackage{amsmath, amsthm, amscd, amsfonts, amssymb, graphicx, color}
\usepackage[bookmarksnumbered, plainpages]{hyperref}

\pagestyle{myheadings}
\thispagestyle{empty}

 	
 \newtheorem{thm}{Theorem}

\newtheorem{lem}{Lemma}

%
  %
 %
 \begin{document}
\centerline{\Large{\bf Conditions for Bifurcations in a Non-Autonomous}}
\centerline{}
\centerline{\Large{\bf Scalar Differential Equations}}
 \centerline{}
\centerline{Sang-Mun Kim, Hyong-Chol O*}
 \centerline{}
 \small \centerline{Faculty of Mathematics, \textbf{Kim Il Sung} University, Pyongyang, D.P.R Korea}
\small \centerline{*e-mail address: hc.o@ryongnamsan.edu.kp}
\centerline{}
\centerline{}
\begin{abstract}
In this paper are provided some sufficient conditions for a non autonomous scalar differential equation to have saddle node, transcritical and pitchfork bifurcations using higher order derivatives.
\end{abstract}
{\bf Keywords:} non-autonomous scalar differential equation, saddle-node, transcritical, pitchfork, forwards attracting, pullback attracting \\
{\bf MSC(2010):} 37B55, 37C75, 37G10, 34C23
%
%
%
%
\section{Introduction}

\indent

The concept of non-autonomous dynamical systems can be said to have been made from the study on skew product flows
and random dynamical systems in 1990s in the viewpoint of topological dynamics.

A lot of developments have been made together with introduction of various types of concept of attractiveness and consideration of problems of existence and uniqueness of attracting sets and etc. \cite{cheb}-\cite{she}.

In \cite{lan2} they obtained sufficient conditions to occur transcritical, pitchfork and saddle-node bifurcations in a special
type of non-autonomous differential equation generalized from a canonical form of autonomous differential equation where transcritical,
pitchfork and saddle-node bifurcations occur.
And then using them, they studied the conditions for similar bifurcations in the general scalar non-autonomous equation
\begin{equation*}
\dot{x} = f(x, t, \lambda),
\end{equation*}
where $\lambda$ is a parameter. By imposing conditions on the Taylor coefficients in the expansion of $f$ near $x = \lambda = 0$,
they proved various general theorems guaranteeing transcritical, pitchfork, and saddle-node bifurcations.

In \cite{ras} they obtained a sufficient condition to occur transcritical bifurcation in non-autonomous differential equation
\begin{equation*}
\dot{x} = a(t, \alpha)x+b(t, \alpha)x^2+r(t,x, \alpha)
\end{equation*}
and a sufficient condition to occur pitchfork bifurcation in non-autonomous differential equation
\begin{equation*}
\dot{x} = a(t, \alpha)x+b(t, \alpha)x^3+r(t,x, \alpha).
\end{equation*}

In \cite{joh1, joh3} using the framework of skew product flows has been considered a generalized notion of a Hopf bifurcation
and in \cite{she} studied almost periodic scalar non - autonomous differential equations.
In \cite{klo2}, transcritical and pitchfork bifurcations in an almost periodic equation has been analyzed, \cite{joh2} has considered
a non-autonomous \textquoteleft two-step bifurcation\textquoteright and \cite{klo3} gave a nice discussion of the general problem
in the context of skew product flows.

On the other hand, conditions for bifurcations to occur in one-dimensional autonomous dynamical systems have been studied
using higher order derivatives. In \cite{val} sufficient conditions for transcritical, pitchfork, saddle-node and period doubling
bifurcations to occur in one-dimensional maps with one parameter have been studied using higher order derivatives.
In \cite{bal} sufficient conditions for cusp and period doubling bifurcations to occur in one-dimensional maps with two parameters have been studied using higher order derivatives.

In this paper we consider some non-autonomous differential equations generalized from autonomous dynamical systems in
\cite{bal, val}. First we try to obtain a sufficient condition to occur saddle-node and transcritical bifurcations in the equations
\begin{equation}
\dot{x} = \mu^{2m-1}f(t)-g(t)x^{2n}, ~ m, n \in \mathbf{N} \label{eq1}
\end{equation}
where the sufficient condition of \cite{lan2} does not satisfy.
And then we try to obtain sufficient conditions to occur saddle-node and transcritical bifurcations in more general equations
\begin{equation*}
\dot{x} = G(x, t, \lambda), ~ (\lambda \text{ is a parameter})
\end{equation*}
which include \eqref{eq1}.

%
%
%
%

\section{Preliminaries}

We consider the following initial value problem of non-autonomous differential equation
\begin{equation}
\dot{x} = f(t, x), x(s) = x_0 \label{eq2}
\end{equation}
defined on a domain $D \subset \mathbf{R}^m$ of $x$. In this case the initial time $s$ is as important as the main time variable $t$. Denote the solution to \eqref{eq2} by
\begin{equation*}
x(t, s; x_0) = S(t, s)x_0.
\end{equation*}
Then the {\it two-parameter family} $\{S(t, s)\}_{t \geq s}$ of {\it transformations in}  $D$ satisfying the following properties \cite{joh1, sch}:

1) $\forall t \in \textbf{R}, S(t, t)$ is the identity of $D$.

2) $S(t, \tau)S(\tau, s) = S(t, s) ~ (\forall t, \tau, s \in \textbf{R})$.

3) $S(t, s)x_0$ is continuous on $t, s, x_0$.
There may be solutions of \eqref{eq2} that do not exist for all time, and some restrictions to the possible values of $s$ and $t$ may be necessary, giving rise to only a ``local process''.
 
  Through the whole paper, we assume that $\{S(t,s):D\rightarrow D\}_{t\ge s}$ {\it preserves order} \cite{bal}, that is, $\{S(t,s):D\rightarrow D\}_{t\ge s}$ satisfies the property:
$$
x_s>y_s\Rightarrow S(t,s)x_s>S(t,s)y_s\qquad (\forall t,s\in {\mathbf R}).
$$

We first describe some needed basic concepts for the {\it two-parameter family} according to \cite{cheb} in what follows. 

A continuous map $x:{\mathbf R}\rightarrow {\mathbf R}^m$ is called a {\it complete trajectory} if it satisfies 
$$
S(t,s)x_s=x(t)~(\forall t,s\in {\mathbf R}).
$$

A time-varying family $\{\Sigma(t)\}_{t\in {\mathbf R}}$ of sets is called an {\it invariant set} for $S$ if it satisfies 
$$
S(t,s)\Sigma(s)=\Sigma(t)~(\forall t,s\in {\mathbf R}).
$$
If $\Sigma(t)\subset D$, $\forall t\in{\mathbf R}$, then we denote that $\Sigma(\cdot)\subset D$.

For two sets $A$ and $B$, the Hausdorff semi distance is defined as follows:
$$
dist[A,B]=\sup_{a\in A} \inf_{b\in B} d(a,b).
$$
From the definition, $dist[A,B]=0$ implies $A\subseteq B$.  

An invariant set $\Sigma(\cdot)$ is {\it forwards attracting} within $D$ if $\Sigma(\cdot)\subset D$ and
$$
\lim_{t\rightarrow\infty} dist[S(t,s)K,\Sigma(t)]=0
$$
for any $s\in {\mathbf R}$ and compact set $K\subset D$.

An invariant set $\Sigma(\cdot)$ is {\it locally forwards attracting} within $D$ if $\Sigma(\cdot)\subset D$ and for any $s\in {\mathbf R}$, there exists a $\delta(s)$ such that 
$$
\lim_{t\rightarrow\infty} dist[S(t,s)K,\Sigma(t)]=0
$$
for all compact $K\subset N(\Sigma(s),\delta(s))\cap D$.

If $\Sigma(\cdot)$ is forwards attracting in $D$, then it is locally forwards attracting in $D$.

An invariant set $\Sigma(\cdot)$ is {\it pullback attracting} within $D$ if $\Sigma(\cdot)\subset D$ and
$$
\lim_{s\rightarrow +\infty} dist[S(t,s)K,\Sigma(t)]=0
$$
for any $t\in {\mathbf R}$ and compact set $K\subset D$. 

$\Sigma(\cdot)$ is {\it globally} pullback attracting if we can take $D={\mathbf R}^m$.

An invariant set $\Sigma(\cdot)$ is {\it locally pullback attracting} within $D$ if $\Sigma(\cdot)\subset D$ and for any $t\in {\mathbf R}$, there exists a $\delta(t)$ such that if $K(\cdot)(\subset D)$ is compact and
$$
\lim_{s\rightarrow -\infty} dist[K(s),\Sigma(s)]<\delta(t)
$$
then
$$
\lim_{s\rightarrow -\infty} dist[S(t,s)K(s),\Sigma(t)]=0.
$$

If $D$ is a bounded set, any pullback attracting sets in $D$ is locally pullback attracting in $D$. If $D$ is a unbounded set, global pullback attracting sets in $D$ might not be locally pullback attracting in $D$. But we have the following useful results.
\begin{lem}[\cite{cheb}]
If an invariant set $\Sigma(\cdot)$ is pullback attracting set in $D$ and there is a $T$ such that $\bigcup_{t\le T} \Sigma(t)$ is bounded, then $\Sigma(\cdot)$ is locally pullback attracting.
\end{lem}

$\Sigma(\cdot)$ is pullback {\it Lyapunov stable} if 
$$
\forall t\in {\mathbf R},~\varepsilon>0,~\exists\delta(t,\varepsilon)>0;~\forall s(<t),~x_s\in U_{\delta(t)}(\Sigma(s))\Rightarrow S(t,s)x_s\in U_\varepsilon (\Sigma(t)).
$$
\begin{lem}[\cite{lan2}]
If $x^*(\cdot)$  is a complete trajectory and locally pullback attracting, then it is pullback Lyapunov stable. 
\end{lem}

$\Sigma(\cdot)$ is pullback unstable if it is not pullback Lyapunov stable, i.e. if 
$$
\exists t\in {\mathbf R},~\varepsilon>0,~\forall \delta>0;~\exists s<t,~x_s\in U_{\delta(t)}(\Sigma(s)),~dist[S(t,s)x_s,\Sigma(t)]>\varepsilon.
$$

If $\Sigma(\cdot)$ is an invariant set, then the {\it unstable set} $U_\Sigma (\cdot)$ of  is defined as
$$
U_\Sigma (s)=\{x_0:\lim_{t\rightarrow -\infty} dist[S(t,s)x_0,\Sigma(t)]=0\}.
$$

$\Sigma(\cdot)$ is {\it asymptotically unstable} if $\exists t;~U_\Sigma (t)\ne \Sigma(t)$.
\begin{lem}[\cite{lan3}]
If $\Sigma(\cdot)$ is asymptotically unstable, then it is pullback Lyapunov unstable and it cannot be locally pullback attracting.
\end{lem}

An invariant set $\Sigma(\cdot)$ is {\it pullback repelling} within $D$ if it is pullback attracting within $D$ for the time-reversed system, i.e. if $\Sigma(\cdot)\subset D$ and for any $t\in {\mathbf R}$ we have
$$
\lim_{s\rightarrow +\infty} dist[S(t,s)K,\Sigma(t)]=0
$$
for any compact set $K\subset D$. An invariant set $\Sigma(\cdot)$ is {\it locally pullback repelling} within $D$ if it is locally pullback attracting within $D$ for the time-reversed system, i.e. if $\Sigma(\cdot)\subset D$ and and for any $t\in{\mathbf R}$, there exists a  $\delta(t)$ such that if $K(\cdot)(\subset D)$ is compact and
$$
\lim_{s\rightarrow +\infty} dist[K(s),\Sigma(s)]<\delta(t)
$$
then
$$
\lim_{s\rightarrow +\infty} dist[S(t,s)K,\Sigma(t)]=0.
$$

An invariant set $\{A(t)\}_{t\in{\mathbf R}}$ is said to be the {\it pullback attractor} of the process $S$ within $D$ if it satisfies the following 3 conditions:

(1) For every $t\in{\mathbf R}$, $A(t)$ is a compact subset of $D$.

(2) $A(t)$ is pullback attracting within $D$.

(3) $A(t)$ is a minimal in the sense that if $\{C(t)\}_{t\in{\mathbf R}}$ is another family of closed sets that are pullback attracting within $D$, then $A(t)\subseteq C(t)$ for all $t\in{\mathbf R}$.

The following fact provides some information about the structure of attractor.
\begin{lem}[\cite{lan2}]
Let $\{K(t)\}_{t\in{\mathbf R}}$ be a family of non-empty compact sets and assume that for each $t_0$ and any compact set $B\subset D$ there exists a $T=T(t_0,B)$ such that $S(t_0,s)B\subset K(t_0)$, $\forall s\le T$. Then there exists a pullback attractor $A(t)$ within $D$, which is a connected set for every $t\in{\mathbf R}$. If $S(t,s)$ arises from a scalar ODE, then $A(t)=[a_-(t),a_+(t)]$ where $a_\pm (t)$ are complete trajectories for $S(t,s)$(\cite{lan2,lan3,lan5}). 
\end{lem}

The system $\dot{x}=G(x,t,\mu)$ undergoes a {\it local saddle node bifurcation} at $x=0$, $\mu=0$ if there exists $\mu_0>0,~\varepsilon>0$ and a $\delta$ with $0<\delta<\varepsilon$ such that 

(i) for $-\mu_0<\mu\le 0$, there are no complete trajectories lying within $(-\varepsilon,\varepsilon)$;

(ii) for $0<\mu<\mu_0$, there exists a complete trajectory $x_\mu ^+(t)$ that is pullback attracting within $(-\delta,\varepsilon)$ and another complete trajectory $x_\mu^-(t)$ that lies within $(-\varepsilon,\delta)$ and is asymptotically unstable. Furthermore, we have $x_\mu^\pm(t)\rightarrow 0(\mu\rightarrow 0)$.

The system $\dot{x}=G(x,t,\mu)$ undergoes a {\it local transcritical bifurcation} at $x=0$, $\mu=0$ if there exists $\mu_0>0,~\varepsilon>0$ such that

(i) for all $-\mu_0<\mu<0$, the zero solution is locally pullback attracting within $(-\varepsilon,0]$ and pullback attracting within $[0,\varepsilon)$; and there is another negative complete trajectory $x_\mu(t)$ within $(-\varepsilon,0)$ that is asymptotically unstable and $x_\mu(t)\rightarrow 0(\mu\rightarrow 0)$;

(ii) for $\mu=0$, the zero solution is asymptotically unstable but still pullback attracting within $[0,\varepsilon)$; and

(iii) for $0<\mu<\mu_0$, the zero solution is asymptotically unstable, and there is another positive complete trajectory $x_\mu(t)$ within $(0,\varepsilon)$ that is pullback attracting within $(0,\varepsilon)$ and $x_\mu(t)\rightarrow 0(\mu\rightarrow 0)$.

The system $\dot{x}=G(x,t,\mu)$ undergoes a {\it localised pitchfork bifurcation} at $x=0$, $\mu=0$ if there exists $\mu_0>0,~\varepsilon>0$ such that

(i) for all $-\mu_0<\mu\le 0$, the zero solution is pullback attracting within $(-\varepsilon,\varepsilon)$;

(ii) for $0<\mu<\mu_0$, the zero solution is asymptotically unstable, and there exist bounded complete trajectories $x_\mu^+(t)$ and $x_\mu^-(t)$ that are pullback attracting in $(0,\varepsilon)$ and $(-\varepsilon,0)$, respectively, and $x_\mu^\pm(t)\rightarrow 0(\mu\downarrow 0)$.

%
%

\section{Main Results}

\subsection{Saddle node Bifurcation}
First we consider a concrete example.
%
%
\begin{thm}\label{thr1}
Let consider the following non-autonomous differential equation
\begin{equation}
\dot{x} = \mu^{2m-1}f(t) - g(t)x^{2n}, ~ m, n \in \mathbf{N} \label{eq3}
\end{equation}
Let assume that $f(t)$ and $g(t)$ satisfy
\begin{eqnarray}
&& \int_{-\infty}^t f(s)ds = \int_t^{+\infty} f(s)ds = +\infty, \label{eq4} \\
&& \lim_{t \to \pm\infty}\textnormal{inf} ~ g(t)>0, ~ 0<l \leq \lim_{t \to \pm\infty} \frac{f(t)}{g(t)} \leq M. \label{eq5}
\end{eqnarray}
Then we have the following facts:

\textnormal{1)} When $\mu \leq 0$, non-zero bounded complete trajectories do not exist. When $\mu<0$, for any fixed $x_s$, we have
\begin{equation*}
\exists \sigma: s \leq \sigma, \exists t^*(s)<+\infty: \lim_{t \to t^*(s)} x(t, s; x_s) = -\infty
\end{equation*}
and for any fixed $t$, we have
\begin{equation*}
\exists s^*(t)>-\infty: \lim_{s \to s^*(t)} x(t, s; x_s) = -\infty.
\end{equation*}

\textnormal{2)} When $\mu=0$, the zero solution is locally pullback and forwards attracting within $[0, \infty)$. For the solution with initial value in $(-\infty, 0)$,
we have the same conclusions with the case of $\mu<0$.

\textnormal{3)} When $\mu>0$, there exist two trajectories $x^*(t)$ and $y^*(t)$ such that $x^*(t)$ is pullback and forwards attracting, that is, we have
\begin{eqnarray*}
&& \lim_{s \to -\infty}S(t, s)x_0 = x^*(t), ~ x_0>\sqrt[-2n]{l\mu^{2m-1}}, \\
&& \lim_{t \to +\infty} \textnormal{dist} \left[ S(t, s)x_0, x^*(t) \right] = 0, ~ x_0>\sqrt[-2n]{l\mu^{2m-1}},
\end{eqnarray*}
And $y^*(t)$ is pullback repelling and asymptotically instable, that is, we have
\begin{eqnarray*}
&& \lim_{s \to +\infty}S(t, s)x_0 = y^*(t), ~ x_0<\sqrt[2n]{l\mu^{2m-1}}, \\
&& \lim_{t \to -\infty} \textnormal{dist} \left[ S(t, s)x_0, y^*(t) \right] = 0, ~ x_0<\sqrt[2n]{l\mu^{2m-1}}.
\end{eqnarray*}
\end{thm}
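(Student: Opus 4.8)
The plan is to analyze the scalar equation by treating it as a one-dimensional non-autonomous system and exploiting the sign structure of the right-hand side together with the comparison/ordering property of the flow. The equilibrium-like structure is governed by the "frozen" equation $\mu^{2m-1}f(t)=g(t)x^{2n}$, whose solutions are $x=\pm\sqrt[2n]{\mu^{2m-1}f(t)/g(t)}$; under hypothesis \eqref{eq5} the ratio $f(t)/g(t)$ is bounded between $l$ and $M$ for large $|t|$, so I expect the attracting and repelling orbits to be trapped between the curves $\pm\sqrt[2n]{l\mu^{2m-1}}$ and $\pm\sqrt[2n]{M\mu^{2m-1}}$. This is why those explicit thresholds appear in the statement of part 3.

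First I would treat part 1. When $\mu\le 0$ the nonlinearity $-g(t)x^{2n}$ is nonpositive (since $g>0$ eventually and $x^{2n}\ge 0$) while the forcing $\mu^{2m-1}f(t)$ is nonpositive as well (because $2m-1$ is odd so $\mu^{2m-1}\le 0$, and $f$ has divergent integrals, hence is positive in an averaged sense along the orbit). Thus $\dot x\le 0$ whenever $x\ge 0$, and once the solution is negative the term $-g(t)x^{2n}$ drives it down super-linearly like $-cx^{2n}$, so I would obtain finite-time blow-up to $-\infty$ by comparison with the autonomous Riccati-type equation $\dot y=-cy^{2n}$, integrating $\int dy/y^{2n}$ to exhibit the finite escape time $t^*(s)$ and, symmetrically in the backward direction, the finite $s^*(t)$. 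The condition \eqref{eq4} that $\int_{-\infty}^t f=\int_t^{+\infty}f=+\infty$ is what rules out nonzero bounded complete orbits: integrating the equation over a bounded orbit would force a contradiction with the divergence of $\int f$.

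For part 2, the case $\mu=0$ reduces the equation to $\dot x=-g(t)x^{2n}$. On $[0,\infty)$ the zero solution is an equilibrium, and for $x_0>0$ one has $\dot x<0$ with an explicit solution showing $x(t)\to 0$ both forwards (as $t\to+\infty$) and pullback (as $s\to-\infty$); local attractivity follows from the monotone decay and the fact that $\int g=+\infty$. For $x_0<0$ the same finite-time blow-up argument from part 1 applies verbatim. For part 3, the heart of the matter, I would construct $x^*(t)$ as the pullback limit $\lim_{s\to-\infty}S(t,s)x_0$ for large positive $x_0$: I would first establish a positively invariant absorbing region (an interval bounded below by $\sqrt[2n]{l\mu^{2m-1}}$ and above by a large constant), invoke the Preliminaries fact about families of compact sets $K(t)$ to get a connected pullback attracting set, and use the order-preserving property to pin this set down to a single complete orbit. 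The repelling orbit $y^*(t)$ I would obtain symmetrically by reversing time (pullback as $s\to+\infty$ is forward attraction for the time-reversed flow), which also yields the asymptotic instability and the forward convergence from below.

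The main obstacle I anticipate is the rigorous construction and uniqueness of the two complete orbits in part 3 and the sharp identification of the separatrix threshold $\sqrt[2n]{l\mu^{2m-1}}$. Merely producing a pullback attractor is routine given the absorbing-set fact quoted in the Preliminaries, but showing it is a single orbit (not a nondegenerate interval) requires a contraction or monotonicity estimate exploiting the strict sign of $\partial_x(-g x^{2n})=-2n\,g\,x^{2n-1}$, which is negative for $x>0$ and positive for $x<0$ — so the linearization is attracting along $x^*$ and repelling along $y^*$ exactly as claimed. Matching the basin boundaries to the explicit radicals, rather than to some implicitly defined curve, will require the two-sided bounds $l\le f/g\le M$ to be used carefully in the comparison arguments, and this book-keeping between the $l$ and $M$ thresholds is where I expect the delicate part of the estimates to lie.
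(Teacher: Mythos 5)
Your proposal is correct and follows essentially the same route as the paper: explicit integration/comparison of $\dot x \le -g(t)x^{2n}$ for the finite-time blow-up in parts 1) and 2), the explicit solution of $\dot x = -g(t)x^{2n}$ for $\mu=0$, trapping of orbits between the thresholds $\sqrt[2n]{l\mu^{2m-1}}$ and $\sqrt[2n]{M\mu^{2m-1}}$ via the two-sided bound $l \le f/g \le M$, a contraction argument to collapse the attracting set to a single complete orbit, and time reversal to produce the repelling orbit $y^*$. The only cosmetic differences are that you route existence through the abstract pullback-attractor fact quoted in the Preliminaries where the paper squeezes the pullback limit directly, and your linearization sign condition $\partial_x(-g x^{2n})<0$ for $x>0$ appears in the paper in rigorous integrated form as the factored equation for the difference $z=x_1-x_2$ of two solutions.
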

\begin{proof}
In the case of $\mu<0$, by \eqref{eq5} we have
\begin{equation*}
\exists T: \forall t \leq T \Rightarrow f(t)>0, ~ g(t)>0.
\end{equation*}
In the case of $x_s<0$, by the above expression we have $\forall t \leq T, ~ \dot{x} \leq -g(t)x^{2n}$ and
\begin{eqnarray*}
&& \int_s^t \frac{\dot{x}}{x^{2n}}dr \leq -\int_s^t g(r)dr \Rightarrow \int_{x(s)}^{x(t)} \frac{1}{x^{2n}}dx \leq -\int_s^t g(r)dr \\
&& \left. \Rightarrow -\frac{1}{(2n-1)}x^{-(2n-1)} \right\vert_{x=x(s)}^{x(t)} \leq -\int_s^t g(r)dr \\
&& \Rightarrow -\frac{1}{(2n-1)}x(t)^{-(2n-1)} \leq -\frac{1}{(2n-1)}x(s)^{-(2n-1)}-\int_s^t g(r)dr \\
&& \Rightarrow x(t)^{(2n-1)} \leq \left(x(s)^{-(2n-1)}+(2n-1)\int_s^t g(r)dr\right)^{-1} \\
&& \Rightarrow x(t) \leq \left(x(s)^{-(2n-1)}+(2n-1)\int_s^t g(r)dr\right)^{-\frac{1}{2n-1}}.
\end{eqnarray*}
For fixed $t, x_s^{-(2n-1)} < 0$ and $g$ satisfies
\begin{equation*}
\exists T^*: s \leq T^* \Rightarrow \int_s^t g(r)dr>0.
\end{equation*}
Thus we have
\begin{equation*}
\exists s^*(t)>-\infty: \lim_{s \to s^*(t)} x(t, s; x_s) =-\infty.
\end{equation*}
Similarly, if $x_s$ is fixed, we have
\begin{equation*}
\exists \sigma(t): s \leq \sigma(t), \exists t^*(s)<+\infty: \lim_{t \to t^*(s)} x(t, s; x_s) =-\infty.
\end{equation*}
Let consider the case of $x_s<0$. Then for $x_s = -1$, we have
\begin{equation}
\exists \sigma_1: s \leq \sigma_1 \Rightarrow \exists t^*(s)<+\infty, \lim_{t \to t^*(s)} x(t, s; -1) =-\infty. \label{eq6}
\end{equation}
If $t\leq T$, then $\dot{x} \leq \mu^{(2n-1)}f(t)<0$ and thus we have
\begin{equation}
x(t, s; x_s) \leq x_s+\mu^{(2n-1)} \int_s^t f(r)dr, ~ t \leq T. \label{eq7}
\end{equation}
Using $\mu<0$, \eqref{eq4} and \eqref{eq7}, we have
\begin{equation*}
\exists \sigma_2: s \leq \sigma_2 \Rightarrow t \leq \sigma_1, x(t, s; x_s) \leq -1.
\end{equation*}
From \eqref{eq6} and property of order preservation we know $t \leq \sigma_1$ and thus
\begin{equation*}
\forall \tau>t, ~ x(\tau, t; x(t, s; x_s)) \leq x(\tau, t; -1).
\end{equation*}
When $\tau \to t^*(s)$, we have $x(\tau, t; -1) \to -\infty$ and thus $x(\tau, s; x_s) \to -\infty$. On the other hand, for fixed $t$, when $s \to s_1(t)>-\infty$, we have $x(t, s; -1) \to -\infty$.

Using \eqref{eq7}, we have $\exists s_2: s \leq s_2, x(t, s; x_s) \leq -1$ and from property of order perservation, we have $x(t, s; x_s) \leq x(t, s; -1)$. When $s \to s_1(s_2)$, we have $x(t, s; x_s) \to -\infty$.

Next consider the case of $\mu=0$. Then the solution of \eqref{eq3} is as follows:
\begin{equation*}
x(t, s; x_s) = \frac{1}{\left[x_s^{-(2n-1)} + (2n-1) \int_s^t g(r)dr \right]^{\frac{1}{2n-1}}}.
\end{equation*}
If $x_s \geq 0$, then $x_s^{-(2n-1)} \geq 0$ and from \eqref{eq4} we have $s \to -\infty (t \to +\infty)$. Then
\begin{equation*}
(2n-1) \int_s^t g(r)dr \to +\infty, ~ \left[x_s^{-(2n-1)} + (2n-1) \int_s^t g(r)dr \right]^{\frac{1}{2n-1}} \to +\infty.
\end{equation*}
Thus we have
\begin{equation*}
x(t, s; x_s) \to 0 (t \to +\infty, s \to -\infty).
\end{equation*}
In order to show that the zero solution is locally pullback attracting, we must prove
\begin{equation*}
\left[x_s^{-(2n-1)} + (2n-1) \int_s^t g(r)dr \right]^{\frac{1}{2n-1}} >0, ~ \forall \tau \in [s, t].
\end{equation*}
If
\begin{equation*}
x_s < \frac{1}{\sup_{\tau \in [T^-, t]} \left\vert \left[ (2n-1)\int_{T^-}^{\tau}g(r)dr \right]^{\frac{1}{2n-1}} \right\vert} = \delta(t),
\end{equation*}
then the above expression holds. Thus the zero solution is locally pullback attracting.
It is similar to prove that the zero solution is locally forwards attracting.

If $x_s<0$, then we have the same result with the case when $\mu<0$ and $x_s<0$.

Let consider the case of $\mu>0$. From the condition \eqref{eq5} we have
\begin{eqnarray*}
&& \exists T^-<0, \exists T^+>0; \forall t \leq T^-, \forall t \geq T^+ \\
&& \Rightarrow \dot{x} \leq \mu^{(2m-1)}Mg(t)-g(t)x^{2n} = g(t)\left[ M\mu^{(2m-1)}-x^{2n}\right], \\
&& \dot{x} \geq \mu^{(2m-1)}lg(t)-g(t)x^{2n} = g(t)\left[ l\mu^{(2m-1)}-x^{2n}\right].
\end{eqnarray*}
Thus we have
\begin{eqnarray*}
&& \dot{x} \leq g(t) \left[ \sum_{k=1}^n \left( \sqrt[2n]{M\mu^{2m-1}}\right)^{2(n-k)} x^{2(k-1)}\right] \left[\left( \sqrt[2n]{M\mu^{2m-1}}\right)^2-x^2 \right], \\
&& \dot{x} \geq g(t) \left[ \sum_{k=1}^n \left( \sqrt[2n]{l\mu^{2m-1}}\right)^{2(n-k)} x^{2(k-1)}\right] \left[\left( \sqrt[2n]{l\mu^{2m-1}}\right)^2-x^2 \right].
\end{eqnarray*}
Let
\begin{eqnarray*}
&& g_1(t):=g(t) \left[ \sum_{k=1}^n \left( \sqrt[2n]{M\mu^{2m-1}}\right)^{2(n-k)} x^{2(k-1)}\right], \\
&& g_2(t):=g(t) \left[ \sum_{k=1}^n \left( \sqrt[2n]{l\mu^{2m-1}}\right)^{2(n-k)} x^{2(k-1)}\right].
\end{eqnarray*}
Then $g_1(t)$ and $g_2(t)$ satisfy the condition \eqref{eq5} on $g(t)$. Therefore we have
\begin{eqnarray*}
&& \dot{x} \leq g_1(t) \left[ \sqrt[2n]{M\mu^{2m-1}} + x \right] \left[ \sqrt[2n]{M\mu^{2m-1}} - x \right], \\
&& \dot{x} \geq g_2(t) \left[ \sqrt[2n]{l\mu^{2m-1}} + x \right] \left[ \sqrt[2n]{l\mu^{2m-1}} - x \right].
\end{eqnarray*}
If $x_0 >  \sqrt[-2n]{l\mu^{2m-1}}$, then
\begin{equation*}
\sqrt[2n]{l\mu^{2m-1}} \leq \lim_{\substack{s \to -\infty \\ t \to +\infty}}x(t, s; x_0) \leq \sqrt[2n]{M\mu^{2m-1}}.
\end{equation*}

Now let $x_1(t)$ and $x_2(t)$ be two different solutions of \eqref{eq3} and $z(t)=x_1(t)-x_2(t)$. Then
\begin{equation*}
\dot{x}_1(t) = \mu^{2m-1}f(t) - g(t)x_1^{2n}(t), ~ \dot{x}_2(t) = \mu^{2m-1}f(t) - g(t)x_2^{2n}(t).
\end{equation*}
Thus we have
\begin{equation}
\dot{z}(t) = -g(t)\left[ x_1^{2n}-x_2^{2n}\right] = -g(t)\left[ \sum_{k=1}^n x_1^{2(n-k)}(t) x_2^{2(k-1)}(t)\right] [x_1+x_2]z(t). \label{eq8}
\end{equation}

If $t\le T^-$ or $\forall t\ge T^+$, then we have $g(t)\left(\sum_{k=1}^n x_1^{2(n-k)}(t)x_2^{2(k-1)}(t)\right)>0$ and $x_1(t),~x_2(t)\ge\sqrt[2n]{l\mu^{2m-1}}$, thus if $\forall t\le T^-$ or $\forall t\ge T$, then $x_1(t)=x_2(t)$. Therefore there exists a positive solution $x^*(t)$ such that it (pullback, forwards) attracts all trajectories with initial data greater than $\sqrt[-2n]{l\mu^{2m-1}}$.
Now if $x_0<\sqrt[-2n]{M\mu^{2m-1}}$, then the solutions go to $-\infty$ (pullback, forwards). If $x_0<\sqrt[2n]{l\mu^{2m-1}}$, then
$
\sqrt[-2n]{M\mu^{2m-1}} \leq \lim_{\substack{t \to -\infty \\ s \to +\infty}}x(t, s; x_0) \leq \sqrt[-2n]{l\mu^{2m-1}}
$
and for the two different solutions $x_1(t)$ and $x_2(t)$ of \eqref{eq3}, we have \eqref{eq8}. Repeating the above arguments, we have the following conclusion:

If $\forall t \leq T^-, ~ \forall t \geq T^+, ~ x_1(t), x_2(t) \leq \sqrt[-2n]{l\mu^{2m-1}}$, then $ x_1(t)=x_2(t)$.

Thus there exists a negative solution $y^*(t)$ such that it (pullback, forwards) attracts all trajectories with initial data less than $\sqrt[2n]{l\mu^{2m-1}}$ in the meaning of time inverse. That is, $y^*(t)$ is pullback repelling.
\begin{equation*}
\lim_{\substack{s \to +\infty \\ t \to -\infty}}x(t, s; x_s) = y^*(t), ~ x_s < \sqrt[2n]{l\mu^{2m-1}}.
\end{equation*}
\end{proof}

Now we consider general equations
\begin{equation}
\dot{x} = G(t, x, \mu). \label{eq9}
\end{equation}
Assume that $G$ is sufficiently smooth. The following is Taylor expansion of $G$ at $(t, 0, 0)$.
\begin{eqnarray*}
&& G(t, x, \mu) =  G(t, 0, 0)+G_x(t, 0, 0)x+G_{\mu}(t, 0, 0)\mu + \frac{1}{2}G_{xx}(t, 0, 0)x^2 \\
&& \qquad + G_{x\mu}(t, 0, 0)x\mu + \frac{1}{2}G_{\mu\mu}(t, 0, 0)\mu^2+ \frac{1}{6}G_{xxx}(t, 0, 0)x^3 + \frac{1}{2}G_{xx\mu}(t, 0, 0)x^2\mu \\
&& \qquad + \frac{1}{2}G_{x\mu\mu}(t, 0, 0)x\mu^2+ \frac{1}{6}G_{\mu\mu\mu}(t, 0, 0)\mu^3 + \cdots + \frac{1}{(2n)!} \left[ \frac{\partial^{2n}}{\partial x^{2n}}G(t, 0, 0)x^{2n} \right. \\
&& \qquad + C_{2n}^1 \frac{\partial^{2n}}{\partial x^{2n-1} \partial \mu}G(t, 0, 0)x^{2n-1}\mu + \cdots + C_{2n}^{2n-1} \frac{\partial^{2n}}{\partial x \partial \mu^{2n-1}}G(t, 0, 0)x \mu^{2n-1} \\
&& \qquad \left. +\frac{\partial^{2n}}{\partial \mu^{2n}}G(t, 0, 0)\mu^{2n} \right] + \frac{1}{(2n+1)!} \left[ \frac{\partial^{2n+1}}{\partial x^{2n+1}}G(t, 0, 0)x^{2n+1} \right.\\
&& \qquad + C_{2n+1}^1 \frac{\partial^{2n+1}}{\partial x^{2n} \partial \mu}G(t, 0, 0)x^{2n}\mu + \cdots + C_{2n+1}^{2n} \frac{\partial^{2n+1}}{\partial x \partial \mu^{2n}}G(t, 0, 0)x \mu^{2n}+ \cdots \\
&& \qquad \left. + C_{2n+1}^{2n} \frac{\partial^{2n+1}}{\partial x \partial\mu^{2n}}G(t, 0, 0)x\mu^{2n} + \frac{\partial^{2n+1}}{\partial \mu^{2n+1}}G(t, 0, 0)\mu^{2n+1} \right].
\end{eqnarray*}
Here $n \in \mathbf{N}$.

Now assume that $G$ satisfies the following conditions:
\begin{eqnarray*}
&& \textnormal{(i)} ~ G(t, 0, 0) = 0, ~ \forall t \in \mathbf{R}, \\
&& \textnormal{(ii)} ~ \frac{\partial}{\partial x}G(t, 0, 0) = \frac{\partial^2}{\partial x^2}G(t, 0, 0) = \cdots = \frac{\partial^{2n-1}}{\partial x^{2n-1}}G(t, 0, 0) = 0. \qquad \qquad \qquad \qquad
\end{eqnarray*}
Then $G$ is provided as follows:
\begin{eqnarray*}
&& G(t, x, \mu) =  \mu \left[ G_{\mu}(t, 0, 0)+G_{x\mu}(t, 0, 0)x+ \frac{1}{2}G_{\mu\mu}(t, 0, 0)\mu + \frac{1}{3}G_{xx\mu}(t, 0, 0)x^2 \right. \\
&& \quad + \frac{1}{6}G_{\mu\mu}(t, 0, 0)\mu^2 + \frac{1}{3}G_{x\mu\mu}(t, 0, 0)x\mu + \cdots + \frac{1}{(2n)!}C_{2n}^1 \frac{\partial^{2n}}{\partial x^{2n-1} \partial \mu} \\
&& \quad  \times ~ G(t, 0, 0)x^{2n-1} + \cdots + \frac{1}{(2n)!}C_{2n}^{2n-1} \frac{\partial^{2n}}{\partial x \partial \mu^{2n-1}}G(t, 0, 0)x \mu^{2n-2} \\
&& \quad \left. + \frac{1}{(2n)!}\frac{\partial^{2n}}{\partial \mu^{2n}}G(t, 0, 0)\mu^{2n-1} + \frac{1}{(2n+1)!} C_{2n+1}^1 \frac{\partial^{2n+1}}{\partial x^{2n} \partial \mu}G(t, 0, 0)x^{2n} + \cdots \right] \\
&& \quad +\left[ \frac{1}{(2n)!}\frac{\partial^{2n}}{\partial x^{2n}}G(t, 0, 0) + \frac{1}{(2n+1)!} \frac{\partial^{2n+1}}{\partial x^{2n+1}}G(t, 0, 0)x + \cdots \right]x^{2n}+ \cdots.
\end{eqnarray*}

If we denote $f(t):=G_{\mu}(t, 0, 0)$ and $g(t) := -\frac{1}{(2n)!} \frac{\partial^{2n}}{\partial x^{2n}}G(t, 0, 0)$, then the system \eqref{eq9} can be rewritten as follows:
\begin{equation}
\dot{x} = \mu[f(t)+\phi(t, x, \mu)] - x^{2n}[g(t)+\psi(t, x)]. \label{eq10}
\end{equation}
Here $\phi(t, 0, 0) = 0, ~ \psi(t, 0) = 0$.
%
%
\begin{thm} \label{thr2}
Assume that $G$ satisfies the above mentioned conditions and
\begin{equation}
\lim_{t \to \pm\infty}\textnormal{inf} ~ g(t)>0, \label{eq11}
\end{equation}
\begin{equation}
0<m = \lim_{t \to \pm\infty}\textnormal{inf} ~ \frac{f(t)}{g(t)} \leq  \lim_{t \to \pm\infty}\textnormal{sup} ~ \frac{f(t)}{g(t)}=M<+\infty, \label{eq12}
\end{equation}
and there exists a positive valued function $h(t)$ such that
\begin{equation}
\vert \phi(t, x, \mu) \leq h(t)[\vert x\vert+ \vert\mu\vert], ~~ \vert\phi_x(t, x, \mu) \vert \leq h(t), \label{eq13}
\end{equation}
\begin{equation}
\vert \psi_x(t, x) \vert \leq h(t), \label{eq14}
\end{equation}
\begin{equation}
\lim_{t \to \pm\infty}\textnormal{sup} ~ \frac{h(t)}{g(t)} \leq k. \label{eq15}
\end{equation}
Then there occurs local saddle-node bifurcation when $\mu$ passes through 0.
Furthermore, when $\mu>0$, locally attracting trajectory $x_{\mu}(t)$ is forwards attracting in $(0, \varepsilon)$ and unstable
trajectories are pullback repelling in $(-\varepsilon, \delta)$.
\end{thm}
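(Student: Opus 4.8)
The plan is to treat equation \eqref{eq10} as a perturbation of the model equation \eqref{eq3} with $m=1$, and to transfer the trichotomy of Theorem \ref{thr1} by a comparison (order-preservation) argument. First I would fix a small neighborhood $|x|\le\varepsilon$ of the origin together with a small parameter range $0<\mu<\mu_0$, and use \eqref{eq13}, \eqref{eq14} with $\phi(t,0,0)=0$, $\psi(t,0)=0$ to control the perturbation. Writing $\psi(t,x)=\psi(t,x)-\psi(t,0)=\int_0^x\psi_x(t,s)\,ds$ yields $|\psi(t,x)|\le h(t)|x|$, and likewise $|\phi(t,x,\mu)|\le h(t)(|x|+|\mu|)$. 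Hypothesis \eqref{eq15}, namely $\limsup_{t\to\pm\infty}h(t)/g(t)\le k$, then guarantees that on the chosen neighborhood these terms are dominated, for large $|t|$, by a fixed fraction of $g(t)$.

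With this control in hand I would construct upper and lower comparison equations of the exact form \eqref{eq3}. For $x$ in the restricted region one has
\[
\mu[f(t)-h(t)(|x|+\mu)]-x^{2n}[g(t)+h(t)|x|]\le\dot{x}\le\mu[f(t)+h(t)(|x|+\mu)]-x^{2n}[g(t)-h(t)|x|],
\]
and on $|x|\le\varepsilon$ each bounding right-hand side is again of the form $\mu\,\tilde f(t)-\tilde g(t)x^{2n}$ with coefficients satisfying \eqref{eq4}, \eqref{eq5} but with slightly perturbed constants $m',M'$ (still $0<m'\le M'$, still $\liminf\tilde g>0$). Applying Theorem \ref{thr1} to each comparison equation produces, for $\mu>0$, attracting orbits clustering near $(m'\mu)^{1/2n}$ and repelling orbits near $-(m'\mu)^{1/2n}$; by order preservation the true solution of \eqref{eq10} is sandwiched between them.

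Next I would establish attractivity and uniqueness directly for \eqref{eq10} by repeating the difference-equation argument of Theorem \ref{thr1}. For two solutions $x_1,x_2$ and $z=x_1-x_2$, the mean value theorem gives
\[
\dot{z}=\bigl[\mu\,\phi_x(t,\xi,\mu)-F'(t,\eta)\bigr]z,\qquad F(t,x)=x^{2n}[g(t)+\psi(t,x)],
\]
so that $F'(t,x)=2n\,x^{2n-1}[g+\psi]+x^{2n}\psi_x$. For $x$ near the positive cluster $(m'\mu)^{1/2n}$ the dominant term $2n\,x^{2n-1}g(t)$ is of order $\mu^{(2n-1)/2n}g(t)$, while the perturbation $\mu\phi_x$ is bounded by $\mu\,h(t)\le\mu k\,g(t)$ for large $|t|$; since $(2n-1)/2n<1$, for small $\mu$ the bracket is negative and $z\to0$. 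This collapses all orbits with initial data above the unstable level onto a single forwards attracting orbit $x_\mu(t)$, and the time-reversed version gives the pullback repelling orbit $y_\mu(t)$ below it, matching part (3) of Theorem \ref{thr1}. For $\mu\le0$ the comparison equations blow down to $-\infty$ exactly as in parts (1)--(2), so no nonzero bounded complete orbit survives; letting $\mu\to0^+$ both $x_\mu(t)$ and $y_\mu(t)$ are squeezed into $(-(M'\mu)^{1/2n},(M'\mu)^{1/2n})$ and collapse onto the zero orbit, which is precisely the saddle-node scenario and fixes the basins $(0,\varepsilon)$ and $(-\varepsilon,\delta)$ asserted in the statement.

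I expect the main obstacle to be the bookkeeping that keeps the comparison equations genuinely inside the hypotheses of Theorem \ref{thr1}: the perturbations are $x$-dependent, so the majorant and minorant are only of the clean model form after restricting to $|x|\le\varepsilon$, and one must verify that solutions started in the relevant neighborhood actually remain there long enough (forwards, and in the pullback sense) for the comparison to close. Choosing $\varepsilon$, $\mu_0$ and $k$ compatibly---so that $h(t)|x|$ never consumes more than, say, half of $g(t)$ while the attracting band $(m'\mu)^{1/2n}$ still lies inside $(0,\varepsilon)$---is the delicate quantitative step on which the whole reduction rests.
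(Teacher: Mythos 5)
Your proposal is correct and takes essentially the same approach as the paper: the paper in fact omits this proof entirely, saying only that ``the main idea is similar to Theorem~\ref{thr1},'' and your argument---sandwiching \eqref{eq10} between model equations of the form \eqref{eq3} using the bounds \eqref{eq13}--\eqref{eq15} on the restricted region $\vert x\vert\leq\varepsilon$, applying Theorem~\ref{thr1} to the comparison equations, and repeating its difference/contraction argument for uniqueness of the attracting and repelling orbits---is precisely that idea carried out. The quantitative compatibility of $\varepsilon$, $\mu_0$ and $k$ that you flag as the delicate step is indeed the substantive content the paper leaves unwritten, and your treatment of it is at least as rigorous as the paper's own proof of Theorem~\ref{thr1}.
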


The  main idea of the proof  is  similar to Theorem \ref{thr1} and omitted. \\

Now assume that $G$ satisfies the following conditions:
\begin{eqnarray*}
&& \textnormal{(i)} ~ G(t, x, \mu) = G(t, x, 0)+c(x)G(0, 0, \mu), \\
&& \textnormal{(ii)} ~ G(t, 0, 0) = 0, \\
&& \textnormal{(iii)} ~ \frac{\partial}{\partial x}G(t, 0, 0) = \frac{\partial^2}{\partial x^2}G(t, 0, 0) = \cdots = \frac{\partial^{2n-1}}{\partial x^{2n-1}}G(t, 0, 0)=0, \\
&& \textnormal{(iv)} ~ \frac{\partial}{\partial \mu}G(t, 0, 0) = \frac{\partial^2}{\partial \mu^2}G(t, 0, 0) = \cdots = \frac{\partial^{2m-2}}{\partial \mu^{2m-2}}G(t, 0, 0)=0. \qquad \qquad \qquad \qquad
\end{eqnarray*}
Here $n, m \in \mathbf{N}$. Then Tailor expansion of $G$ is provided as follows:
\begin{eqnarray*}
&& G(t, x, \mu) =  \mu^{2m-1} \left[x\mu^{-(2m-2)}  \frac{\partial^2}{\partial x \partial\mu} G(t, 0, 0)+\frac{1}{2}x^2\mu^{-(2m-2)}\frac{\partial^3}{\partial x^2 \partial\mu}G(t, 0, 0) \right. \\
&& \qquad + \frac{1}{2}x\mu^{-(2m-3)}\frac{\partial^3}{\partial x \partial\mu^2}G(t, 0, 0) + \cdots +\frac{1}{(2m-1)!}\frac{\partial^{2m-1}}{\partial\mu^{2m-1}}G(t, 0, 0) \\
&& \qquad + \frac{1}{(2m)!}C_{2m}^1 x^{2m-1}\mu^{-(2m-2)} \frac{\partial^{2m}}{\partial x^{2m-1} \partial\mu}G(t, 0, 0) \\
&& \qquad \left. + \frac{1}{(2m)!}C_{2m}^2 x^{2m-2}\mu^{-(2m-3)} \frac{\partial^{2m}}{\partial x^{2m-2} \partial\mu^2}G(t, 0, 0)+\cdots \right] \\
&& \qquad + x^{2n}\left[ \frac{1}{(2n)!} \frac{\partial^{2n}}{\partial x^{2n}}G(t, 0, 0)+\frac{1}{(2n+1)!}x\cdot \frac{\partial^{2n+1}}{\partial x^{2n+1}}G(t, 0, 0)+\cdots \right].
\end{eqnarray*}
Denote $f(t):=\frac{1}{(2m-1)!}\frac{\partial^{2m-1}}{\partial \mu^{2m-1}}G(t, 0, 0), ~ g(t):=-\frac{1}{(2n)!}\frac{\partial^{2n}}{\partial x^{2n}}G(t, 0, 0)$. Then we can rewritten \eqref{eq9} as follows:
\begin{equation*}
\dot{x} = \mu^{2m-1}[f(t)+\phi(t, x, \mu)] - x^{2n}[g(t)+\psi(t, x)].
\end{equation*}
Here
\begin{equation*}
\phi(t, 0, 0) = 0, ~ \psi(t, 0) = 0.
\end{equation*}
%
%
\begin{thm}
Assume that
\begin{equation*}
\lim_{t \to \pm\infty}\textnormal{inf} ~ g(t)>0, ~ 0<m = \lim_{t \to \pm\infty}\textnormal{inf}\frac{f(t)}{g(t)} \leq  \lim_{t \to \pm\infty}\textnormal{sup} ~ \frac{f(t)}{g(t)}=M<+\infty,
\end{equation*}
\begin{equation*}
\vert \phi(t, x, \mu) \leq h(t)[\vert x\vert+ \vert\mu\vert^{-(2m-2)}], ~~ \vert\phi_x(t, x, \mu) \vert \leq h(t),
\end{equation*}
\begin{equation*}
\vert \psi_x(t, x) \vert \leq h(t), ~~ \lim_{t \to \pm\infty}\textnormal{sup} ~ \frac{h(t)}{g(t)} \leq k.
\end{equation*}
Then there occurs local saddle-node bifurcation when $\mu$ passes through 0. Furthermore, when $\mu>0$,
locally attracting trajectory $x_{\mu}(t)$ is forwards attracting in $(0, \varepsilon)$ and unstable trajectory are pullback repelling in $(-\varepsilon, \delta)$.
\end{thm}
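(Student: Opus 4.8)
The plan is to deduce the statement from Theorem \ref{thr1} by a comparison argument, in exactly the way Theorem \ref{thr2} is obtained from Theorem \ref{thr1}. Writing the equation in the form
\[
\dot{x} = \mu^{2m-1}f(t) - g(t)x^{2n} + \mu^{2m-1}\phi(t,x,\mu) - x^{2n}\psi(t,x),
\]
I would treat the last two terms as a perturbation of the principal part $\mu^{2m-1}f(t)-g(t)x^{2n}$, which is precisely equation \eqref{eq3} handled in Theorem \ref{thr1}. Because $\psi(t,0)=0$ and $|\psi_x|\le h(t)$ (the analogue of \eqref{eq14}), the mean value theorem gives $|\psi(t,x)|\le h(t)|x|$, so $|x^{2n}\psi|\le h(t)|x|^{2n+1}$; and the hypothesis $|\phi|\le h(t)\big[\,|x|+|\mu|^{-(2m-2)}\,\big]$ controls $\mu^{2m-1}\phi$ after multiplication by $\mu^{2m-1}$. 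By the domination $\limsup_{t\to\pm\infty} h(t)/g(t)\le k$ (the analogue of \eqref{eq15}) these perturbations are small relative to the leading coefficient $g(t)$ for $|t|$ large.

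First I would fix a small box $\{\,|x|\le R,\ 0<\mu\le\mu_0\,\}$ around the bifurcation point and, using \eqref{eq11}, \eqref{eq12} and the bound on $h/g$, choose $T^-<0<T^+$ so large that on $t\le T^-$ and on $t\ge T^+$ the coefficient $g(t)+\psi(t,x)$ lies between $g(t)(1-kR)$ and $g(t)(1+kR)$, while the effective ratio $[f(t)+\phi]/[g(t)+\psi]$ stays in a positive interval $[l',M']$ with $l'>0$. Substituting these bounds and factoring the even power exactly as in the proof of Theorem \ref{thr1} yields one-sided differential inequalities
\[
\dot{x}\le \tilde g_1(t)\big[(\sqrt[2n]{M'\mu^{2m-1}})^{2}-x^{2}\big],\qquad \dot{x}\ge \tilde g_2(t)\big[(\sqrt[2n]{l'\mu^{2m-1}})^{2}-x^{2}\big],
\]
where $\tilde g_1,\tilde g_2$ absorb the positive polynomial factors $\sum_{k=1}^n(\cdots)^{2(n-k)}x^{2(k-1)}$ and still satisfy \eqref{eq11}. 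By the order preservation of $\{S(t,s)\}$ this sandwiches every solution of the perturbed equation between solutions of two equations of the type already covered by Theorem \ref{thr1}.

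Applying Theorem \ref{thr1} to the two comparison equations, for $\mu>0$ both produce positive attracting thresholds of order $\mu^{(2m-1)/2n}$, and the squeeze forces
\[
\sqrt[2n]{l'\mu^{2m-1}}\ \le\ \lim_{\substack{s\to-\infty\\ t\to+\infty}}x(t,s;x_0)\ \le\ \sqrt[2n]{M'\mu^{2m-1}}
\]
for initial data above the threshold. Existence of the attracting complete orbit $x_\mu(t)$ then follows from the absorbing-set criterion recalled at the end of the Preliminaries (\cite{lan5}): the family of order intervals pinched between the two comparison solutions is compact, forward invariant and pullback absorbing, hence contains a connected pullback attracting orbit. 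Uniqueness, and the upgrade of pullback attraction to forwards attraction on $(0,\varepsilon)$, I would obtain from the variational identity \eqref{eq8}: the difference $z=x_1-x_2$ of two solutions satisfies $\dot z=-[g(t)+\psi]\big(\sum_{k=1}^n x_1^{2(n-k)}x_2^{2(k-1)}\big)(x_1+x_2)z$, whose coefficient is strictly positive for $|t|$ large since both solutions sit near $+\sqrt[2n]{l'\mu^{2m-1}}>0$, forcing $x_1\equiv x_2$ there. Reversing time yields the pullback repelling orbit $y_\mu(t)$ near the negative root, repelling on $(-\varepsilon,\delta)$. The cases $\mu\le 0$ go through verbatim as in Theorem \ref{thr1}, since for small $|x|$ the perturbation cannot change the sign of the forcing; the two branches then collide as $\mu\downarrow 0$, which is by definition a local saddle-node bifurcation.

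The step I expect to be the main obstacle is verifying that the perturbed comparison coefficients satisfy the hypotheses \eqref{eq11}--\eqref{eq12} of Theorem \ref{thr1} \emph{uniformly} in small $\mu$, which is exactly what keeps $l'>0$. The delicate feature is the $|\mu|^{-(2m-2)}$ growth now permitted in the bound on $\phi$: after multiplication by $\mu^{2m-1}$ it contributes a term of size $\mu\,h(t)$, so for $m\ge 2$ one must check that this does not swamp the principal forcing $\mu^{2m-1}f(t)$ precisely on the region where the equilibria live, namely where $x^{2n}$ is of order $\mu^{2m-1}$. Tying the a priori bound $R$ to $\mu$ so that $x$ and the forcing are of matching order is what secures $l'>0$; this balancing of scales, rather than any single estimate, is the heart of the argument.
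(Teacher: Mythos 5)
Your overall scheme --- rewrite the equation as a perturbation of \eqref{eq3}, sandwich its solutions between two comparison equations of type \eqref{eq3}, and invoke Theorem \ref{thr1} plus the uniqueness identity \eqref{eq8} --- is exactly what the paper intends (it omits this proof entirely, indicating only that the idea parallels Theorem \ref{thr1}), so there is no conflict of strategy. The gap is at the step you yourself single out as ``the heart of the argument,'' and your proposed remedy does not close it. After multiplication by $\mu^{2m-1}$, the admissible perturbation of the forcing is only bounded by $h(t)\left[\mu^{2m-1}|x|+\mu\right]$: the first summand is indeed controlled by shrinking the $x$-box, but the second summand $\mu\,h(t)$ is \emph{independent of} $x$, so no choice of $R(\mu)$, however cleverly tied to $\mu$, can make it small relative to the principal forcing $\mu^{2m-1}f(t)$. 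For the integer $m\ge 2$ in the exponent, the ratio is of order $\bigl(h(t)/f(t)\bigr)\mu^{-(2m-2)}\to\infty$ as $\mu\to 0$, no matter how small $k$ is; consequently the lower comparison equation can have negative forcing for all large $|t|$, it fails hypothesis \eqref{eq5}, and Theorem \ref{thr1} simply does not apply to it. The same defect undoes your claim that the cases $\mu\le 0$ go through ``verbatim'': for $\mu<0$ the perturbation can be as large as $|\mu|h(t)$, which can reverse the sign of the forcing, so non-existence of bounded complete orbits is not inherited either.

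This is not a technicality that sharper estimates can repair, because the statement, read with only the listed hypotheses, is false when $m\ge 2$. Take $n=1$, $m=2$, $f\equiv g\equiv 1$, $\psi\equiv 0$, and $\phi(t,x,\mu)=-4\,t^{2}\mu^{2}/(1+t^{2}\mu^{2})$. Then $\phi$ is smooth, $\phi(t,0,0)=0$, $\phi_x\equiv 0$, and $|\phi|\le 4\le 4\left[|x|+|\mu|^{-2}\right]$, so all the stated bounds hold with $h\equiv 4$, $k=4$ (and the induced $G$ satisfies the paper's structural conditions (iv)--(vi)); yet for every small $\mu>0$ and all $|t|\ge 1/\mu$ one has $\mu^{3}\left[1+\phi\right]\le-\mu^{3}$, hence $\dot x\le-\mu^{3}-x^{2}$ there, and every solution becomes negative and blows up to $-\infty$ in finite forward time. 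Thus no complete orbit --- in particular no forwards or pullback attracting orbit $x_{\mu}(t)$ --- exists for any small $\mu>0$, contradicting the conclusion; variants of this example (replace $4$ by a small constant and rescale the $t$-window) defeat any prescribed smallness of $k$ as well. Any correct proof must therefore use information beyond the stated bounds: either the structural hypothesis (iii) from which the decomposition was derived, or a bound of Theorem \ref{thr2} type \eqref{eq13}, $|\phi|\le h(t)\left[|x|+|\mu|\right]$, which is what actually forces $\sup_{t}|\phi(t,0,\mu)|/g(t)\to 0$ as $\mu\to 0$ --- precisely the uniformity your balancing argument needs, and which the hypothesis $|\phi|\le h(t)\left[|x|+|\mu|^{-(2m-2)}\right]$ fails to supply (the condition $\phi(t,0,0)=0$ gives $\phi(t,0,\mu)\to 0$ only pointwise in $t$, not uniformly).
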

The proof is omitted.\\

\textbf{Example 1}. In the equation $\dot{x} = \mu^3t^2-2t^2x^4$, saddle node bifurcation occurs when $\mu=0$.

\subsection{Transcritical Bifurcation}
First we consider a concrete example.
%
%
\begin{thm}\label{theorem4}
Let consider the non-autonomous differential equation \eqref{eq-18}.
\begin{equation}
\dot{x} = \mu^{2m-1}f(t) - g(t)x^{2n}, ~ m, n \in \mathbf{N} \label{eq-18}
\end{equation}
\textnormal{1)} Let assume that $f(t)$ and $g(t)$ satisfy
\begin{eqnarray}
&& \forall t \in \mathbf{R}, ~ \int_{-\infty}^t f(s)ds = +\infty, \label{eq16} \\
&& \exists T^- \in \mathbf{R}: \forall t(\leq T^-), ~ g(t)\geq r^->0, \label{eq17} \\
&& \exists \mu_0 (>0): \nonumber \\
&& \quad \forall \mu (-\mu_0<\mu \leq 0), \forall t \in \mathbf{R}, \nonumber \\
&& \qquad  \lim_{s \to -\infty} \textnormal{inf} \frac{e^{\mu^{(2m-1)}F(s)}}{\left[ (2n-1)\int_s^t g(r)e^{(2n-1)\mu^{(2m-1)}F(r)}dr \right]^{\frac{1}{2n-1}}} \geq m_{\mu}>0, \label{eq18} \\
&& \quad \forall \mu (0<\mu< \mu_0), \forall t \in \mathbf{R}, \nonumber \\
&& \qquad 0<m_{\mu} \leq x_{\mu}(t) = \frac{e^{\mu^{(2m-1)}F(t)}}{\left[ (2n-1)\int_{-\infty}^t g(r)e^{(2n-1)\mu^{(2m-1)}F(r)}dr \right]^{\frac{1}{2n-1}}} \leq M_{\mu}. \qquad  \label{eq19}
\end{eqnarray}
Here $F$ is an antiderivative of $f$. Then we have the following facts:

When $-\mu_0<\mu \leq 0$, the zero solution to \eqref{eq-18} is locally pullback attracting in $\mathbf{R}$.
When $\mu=0$, the zero solution to \eqref{eq-18} is asymptotically instable but locally pullback attracting in $\mathbf{R}^+$.
When $0<\mu< \mu_0$, the zero solution to \eqref{eq-18} is asymptotically instable and the trajectory $x_{\mu}(t)$ is locally
pullback attracting in $\mathbf{R}^+$. Furthermore
\begin{equation*}
\forall t \in \mathbf{R}, ~ x_{\mu}(t) \to 0 ~ (\mu \to 0).
\end{equation*}

\textnormal{2)} Let assume that $f(t)$ and $g(t)$ satisfy
\begin{equation}
\exists T^+: \forall t \geq T^+, g(t) \geq r^+>0, ~~ \int_t^{+\infty} f(s)ds = +\infty. \label{eq20}
\end{equation}
Then there exists a $\mu_0 (>0)$ such that the zero solution to \eqref{eq16} is forwards attracting for $-\mu_0<\mu \leq 0$
and the trajectory $x_{\mu}(t)$ is forwards attracting for $0<\mu< \mu_0$. Furthermore the additional condition
\begin{eqnarray}
&& \forall \mu<0, \forall t \in \mathbf{R}, 0<m_{\mu} \leq x_{\mu}(t) \nonumber \\
&& \qquad \qquad \qquad = \frac{e^{\mu^{(2m-1)}F(t)}}{\left[ (2n-1)\int_t^{\infty} g(r)e^{(2n-1)\mu^{(2m-1)}F(r)}dr \right]^{\frac{1}{2n-1}}} \leq M_{\mu}  \label{eq21}
\end{eqnarray}
is satisfied, then the trajectory $x_{\mu}(t)$ is asymptotically instable and pullback repelling when $-\mu_0<\mu \leq 0$.
And we have
\begin{equation*}
\forall t \in \mathbf{R}, ~ x_{\mu}(t) \to 0 ~ (\mu \to 0).
\end{equation*}
\end{thm}
\begin{proof}
When $x(s)=x_s$, the solution to \eqref{eq18} is given by
\begin{eqnarray*}
&& x(t, s, x_s) = \frac{e^{\mu^{(2m-1)}F(t)}}
{({x_s}^{-(2n-1)}e^{(2n-1)\mu^{(2m-1)}F(s)}+(2n-1)\int_t^{\infty} g(r)e^{(2n-1)\mu^{(2m-1)}F(r)}dr )^{\frac{1}{2n-1}}}.
\end{eqnarray*}
Consider the case of  $\mu<0$. For any $x_s\ne 0$ , we have $x(t, s, x_s)\rightarrow 0(s\rightarrow-\infty)$.
But we must ensure the existence of the solution  $x(t, s, x_s)$ for any $\tau\in[s,t]$.
When $x_s> 0$, if
\begin{equation}\label{eq-25}
\forall\tau\in[s,t] ,({x_s}^{-(2n-1)}e^{(2n-1)\mu^{(2m-1)}F(s)}+(2n-1)\int_t^{\infty} g(r)e^{(2n-1)\mu^{(2m-1)}F(r)}dr )^{\frac{1}{2n-1}}
\end{equation}
then the solution exists. On the other hand, from \eqref{eq20} we have
\begin{eqnarray*}
&& (2n-1)\int_s^{T^-} g(r)e^{(2n-1)\mu^{(2m-1)}F(r)}dr>0.
\end{eqnarray*}
and thus if
\begin{equation}\label{eq-26}
\forall \tau\in[T^-,t],~x_s^{-(2n-1)}e^{(2n-1)\mu^{(2m-1)}F(s)}+(2n-1)\int_{T^-}^\tau g(r)e^{(2n-1)\mu^{2m-1}F(r)}dr>0
\end{equation}
then we have \eqref{eq-25}. Here $T^-$ satisfies $\forall r\le T^-,~g(r)\ge r^->0$ and  $s\le T^-$.  
Since $\mu<0$ and $F(s)\rightarrow-\infty(s\rightarrow-\infty)$, $e^{\mu^{(2m-1)}F(s)}$ is bounded from bellow for $s\in(-\infty,T^-]$ and there exists  $\inf_{s\le T^-} e^{\mu^{(2m-1)}F(s)}$. And if 
\begin{equation}\label{eq-27}
x_s<\frac{\inf_{s\le T^-} e^{\mu^{(2m-1)}F(s)}}{\sup_{\tau\in[T^-,t]} \left|(2n-1)\int_{T^-}^\tau g(r)e^{(2n-1)\mu^{(2m-1)}F(r)}dr^\frac{1}{(2n-1)}\right|}
\end{equation}
then we have \eqref{eq-26}. The right side of \eqref{eq-27} depends only on $t$. If we denote the right side of \eqref{eq-27} by $\delta(t)$, then $\delta(t)>0$. Therefore the solution to \eqref{eq-18} with the initial value $x_s(>0)$ such that $x_s<\delta(t)$ exists for any $\tau\in [s,t]$. And when $s\to -\infty$, we have $x(t,s;x_s)\to 0$. So zero solution is locally pullback attracting in ${\mathbf R}^+$.

Consider the case of $x_s<0$. Note that $g$ is asymptotically positive when $t\to -\infty$ and $e^{(2n-1)\mu^{(2m-1)}F(r)}>0$. Thus we have
\begin{equation}\label{eq-28}
\forall t\in{\mathbf R},~\exists T_t:\forall \tau\le T_t,~\int_\tau^t e^{(2n-1)\mu^{(2m-1)}F(r)}g(r)dr>0
\end{equation}
From \eqref{eq18} and $e^{\mu^{(2m-1)}F(s)}\to-\infty(s\to-\infty)$, for this $T_t$, we have 
\begin{equation}\label{eq-29}
\exists\sigma_t;~\forall s\le\sigma_t,~\frac{e^{\mu^{(2m-1)}F(s)}}{\left((2n-1)\int_s^t e^{(2n-1)\mu^{(2m-1)}F(r)}g(r)dr\right)^\frac{1}{(2n-1)}}\ge\frac{m_\mu}{2}
\end{equation}
\begin{equation}\label{eq-30}
\frac{e^{\mu^{(2m-1)}F(s)}}{\left((2n-1)\int_s^{T_t} e^{(2n-1)\mu^{(2m-1)}F(r)}g(r)dr+(2n-1) \sup_{\tau\in[T_t,t]}\int_{T_t}^\tau e^{(2n-1)\mu^{(2m-1)}F(r)}g(r)dr\right)^{\frac{1}{(2n-1)}}}\ge\frac{m_\mu}{2}.
\end{equation}
For any $\tau\in[s,t]$, if
\begin{equation}\label{eq-31}
x_s^{-(2n-1)}e^{(2n-1)\mu^{(2m-1)}F(s)}+(2n-1)\int_s^\tau g(r)e^{(2n-1)\mu^{(2m-1)}F(r)}dr<0
\end{equation}
then the solution exists. Let $I(s,\tau)=\int_s^\tau g(r)e^{(2n-1)\mu^{(2m-1)}F(r)}dr$.

We have the following 3 cases (i), (ii) and (iii).

(i) If $I(s,\tau)<0$, then \eqref{eq13} is evident.

(ii) If $I(s,\tau)>0$ and $\tau\le T_t$, then from \eqref{eq-28} we have 
\begin{eqnarray*}
&&x_s^{-(2n-1)}e^{(2n-1)\mu^{(2m-1)}F(s)}+(2n-1)I(s,\tau)<\\
&&<x_s^{-(2n-1)}e^{(2n-1)\mu^{(2m-1)}F(s)}+(2n-1)I(s,\tau)+(2n-1)I(\tau,t)=\\
&&=x_s^{-(2n-1)}e^{(2n-1)\mu^{(2m-1)}F(s)}+(2n-1)I(s,t)
\end{eqnarray*}
 
If we prove that
\begin{equation}\label{eq-32}
|x_s|<\frac{e^{\mu^{(2m-1)}F(s)}}{\left((2n-1)\int_s^t g(r)e^{(2n-1)\mu^{(2m-1)}F(r)}dr\right)^\frac{1}{(2n-1)}}
\end{equation}
then we have $x_s^{-(2n-1)}e^{(2n-1)\mu^{(2m-1)}F(s)}+(2n-1)I(s,t)<0$. On the other hand, from \eqref{eq-29}, the right side of \eqref{eq-32} is bounded by $\frac{m_\mu}{2}$  from bellow. 

(iii) If $I(s,\tau)>0$ and $T_t<\tau\le t$, then we must prove
$$
|x_s|<\frac{e^{\mu^{(2m-1)}F(s)}}{\left((2n-1)\int_s^{T_t} e^{(2n-1)\mu^{(2m-1)}F(r)}g(r)dr+(2n-1)\int_{T_t}^\tau e^{(2n-1)\mu^{(2m-1)}F(r)}g(r)dr\right)^{\frac{1}{(2n-1)}}}.
$$
On the other hand, from \eqref{eq-30}  the right side of the above expression is bounded by $\frac{m_\mu}{2}$ from bellow. Therefore for any fixed $t$, we have the following fact: 
there exists $\sigma_t$ such that if $s\le \sigma_t$, $|x_s|<\frac{m_\mu}{2}$, then the solution exists in $[s,t]$ and zero solution is pullback attracting.

In the case of $\mu=0$, the solution of \eqref{eq-18} is given by 
$$
x(t,s;x_s)=\frac{1}{\left(x_s^{(2n-1)}+(2n-1)\int_s^t g(r)dr\right)^{\frac{1}{(2n-1)}}}.
$$

When $x_s>0$, $g$ is asymptotically positive and using the argument when $\mu<0$ we can know that zero solution is pullback attracting in ${\mathbf R}^+$.

When $x_s<0$, if $|x_s|$ is sufficiently small, then the solution to \eqref{eq-18} exists in $[s,t]$ and $x(t,s;x_s)\to 0(t\to-\infty)$. Therefore zero solution is asymptotically unstable.

In the case of $\mu>0$ we fix any $x_s\in{\mathbf R}$ and for every $t$, define $x_\mu(t)$ as follows.
$$
x_\mu(t):=\lim_{s\to-\infty} x(t,s;x_s)=\frac{e^{\mu^{(2m-1)}F(t)}}{\left[(2n-1)\int_{-\infty}^t g(r)e^{(2n-1)\mu^{(2m-1)}F(r)}dr\right]^{\frac{1}{2n-1}}}
$$

Now we prove that the orbit $x_\mu(t)$ is locally pullback attracting in ${\mathbf R}^+$. First for the orbit $x_\mu(t)$, we have $\lim_{s\to-\infty} x(t,s;x_s)=x_\mu(t)$. Now we must prove the following: 

$\forall t\in{\mathbf R},~\exists\sigma_t,~\forall s\le\sigma_t,~\forall\tau\in[s,t]$, there exists the solution $x(\tau,s,x_s)$.

In the case of $x_s<0$, if $s$ is enough large negative number, then $x(\tau,s,x_s)$ will diverge when $\tau\ge s$.

In the case of $x_s>0$, we must prove the following:
\begin{equation}\label{eq-33}
\forall \tau\in[s,t],~x_s^{-(2n-1)}e^{(2n-1)\mu^{(2m-1)}F(s)}+(2n-1)\int_s^\tau g(r)e^{(2n-1)\mu^{(2m-1)}F(r)}dr>0
\end{equation}

Assume that $x_s<x_\mu(s)$. Then we have
\begin{equation}\label{eq-34}
\exists\alpha_t>0:x_s<(1+\alpha_t)^{\frac{1}{(2n-1)}}x_\mu(s)
\end{equation}

If
\begin{equation}\label{eq-35}
\forall\tau\in[s,t],~\int_{-\infty}^\tau g(r)e^{(2n-1)\mu^{(2m-1)}F(r)}dr>\frac{\alpha_t}{1+\alpha_t}\int_{-\infty}^\tau g(r)e^{(2n-1)\mu^{(2m-1)}F(r)}dr
\end{equation}
then \eqref{eq-33} holds. Since $g$ is asymptotically positive when $t\to-\infty$, we can take $s$ such that $s\le T^-$ and $\alpha_t>0$ is properly taken, then we have \eqref{eq-35} for $\forall \tau\in[T^-,t]$.

From \eqref{eq19}, $x_\mu(t)$ satisfies $x_\mu(t)\ge m_\mu>0$ for $\forall t\in{\mathbf R}$. then there exists $x_s$ such that $0<x_s<x_\mu(s)$ and for this $x_s$, the solution exists in $[s,t]$. Thus the orbit $x_\mu(t)$ is locally pullback attracting in ${\mathbf R}^+$. 

Now we prove the asymptotical instability of zero solution.

$x(t)\equiv0$ and $x_\mu(\cdot)$ are the solutions to \eqref{eq-18} and the equation preserves order, therefore the solution such that $0<x_s<x_\mu(s)$ exists in $t\le s$. 
Thus $x_\mu(t)$ is bounded by $m_\mu$ from bellow. Let $\delta(t)=m_\mu$, then there exists the solution to \eqref{eq-18} with initial value $x_s$ such that $0<x_s<\delta(t)$. Since
$$
\lim_{t\to-\infty} \frac{e^{\mu^{(2m-1)}F(t)}}{\left(x_s^{-(2n-1)}e^{(2n-1)\mu^{(2m-1)}F(s)}+(2n-1)\int_s^t g(r) e^{(2n-1)\mu^{(2m-1)}F(r)}dr\right)^{\frac{1}{(2n-1)}}}=0,
$$
zero solution is asymptotically unstable.

We prove $x_\mu(t)\to 0(\mu\to 0)$. Fix any $t,~\varepsilon(>0)$ and take $T$ such that $\int_T^t g(r)dr>\frac{2e^{(2n-1)\mu^{(2m-1)}F(r)}}{\varepsilon}$. Then we have
\begin{eqnarray*}
&&\int_{-\infty}^t g(r) e^{(2n-1)\mu^{(2m-1)}F(r)}dr=\int_{-\infty}^T g(r) e^{(2n-1)\mu^{(2m-1)}F(r)}dr+\\
&&+\int_T^t g(r) e^{(2n-1)\mu^{(2m-1)}F(r)}dr>\int_T^t g(r) e^{(2n-1)\mu^{(2m-1)}F(r)}dr
\end{eqnarray*}

Now take $\mu$ arbitrarily small so that 
$$
\sup_{r\in[T,t]} \left|e^{(2n-1)\mu^{(2m-1)}F(r)}-1\right|<\frac{e^{(2n-1)\mu^{(2m-1)}F(r)}}{\varepsilon\int_T^t g(r)dr},
$$
then we have $\int_{-\infty}^t e^{(2n-1)\mu^{(2m-1)}F(r)}g(r)dr>\frac{e^{(2n-1)\mu^{(2m-1)}F(r)}}{\varepsilon}$ and thus $x_\mu(t)<\varepsilon$. Under the condition $\int_t^{+\infty} f(s)ds=+\infty$ and \eqref{eq20}, zero solution is locally forward attracting when $\mu<0$.

When $x_s>0$, if 
\begin{equation}\label{eq-36}
\forall \tau\in[s,t],~\left(x_s^{-(2n-1)}e^{(2n-1)\mu^{(2m-1)}F(s)}+(2n-1)\int_\tau^t g(r)e^{(2n-1)\mu^{(2m-1)}F(r)}dr\right)^{\frac{1}{(2n-1)}}>0
\end{equation}
then the solution exists. From \eqref{eq20} we have
$$
(2n-1)\int_{T^+}^t g(r) e^{(2n-1)\mu^{(2m-1)}F(r)}dr>0
$$
and therefore if
\begin{equation}\label{eq-37}
\forall \tau\in[s,T^+],~x_s^{-(2n-1)}e^{(2n-1)\mu^{(2m-1)}F(s)}+(2n-1)\int_\tau^{T^+} g(r) e^{(2n-1)\mu^{(2m-1)}F(r)}dr>0
\end{equation}
then we have \eqref{eq-36}. Here $T^+$ satisfies $\forall r\ge T^+,~g(r)\ge r^+>0$ and $s\le T^+$.  
Since $\mu<0$ and $F(s)\to+\infty(s\to+\infty)$, $e^{(2n-1)\mu^{(2m-1)}F(s)}$ is bounded from bellow in $s\in(-\infty,T^+]$ and there exists $\inf_{s\le T^+} e^{(2n-1)\mu^{(2m-1)}F(s)}$. Furthermore if 
\begin{equation}\label{eq-38}
x_s<\frac{\inf_{s\le T^+} e^{(2n-1)\mu^{(2m-1)}F(s)}}{\sup_{\tau\in[s,T^+]} \left|\left((2n-1)\int_\tau^{T^+} g(r) e^{(2n-1)\mu^{(2m-1)}F(r)}dr\right)^{\frac{1}{(2n-1)}}\right|}
\end{equation}
then we have \eqref{eq-37}. If we denote the right hand of \eqref{eq-38} by $\delta(s)$, then $\delta(s)>0$. Thus there exists the solution with initial value $x_s$ such that $0<x_s<\delta(s)$ for any $\tau\in[s,t]$ and $x(t,s;x_s)\to 0$ when $t\to+\infty$. Thus zero solution is locally forward attracting in ${\mathbf R}^+$.

Now consider the case of $x_s<0$. To do this, we need 
\begin{equation}\label{eq-39}
\forall\mu(-\mu_0<\mu\le 0),~\forall t\in{\mathbf R},~\lim_{t\to+\infty}\inf\frac{e^{(2n-1)\mu^{(2m-1)}F(s)}}{\left[(2n-1)\int_s^t g(r) e^{(2n-1)\mu^{(2m-1)}F(r)}dr\right]^{\frac{1}{2n-1}}}\ge m_\mu>0
\end{equation}
which is equivalent to \eqref{eq19}. Since $g$ is asymptotically positive when $t\to-\infty$ and $e^{(2n-1)\mu^{(2m-1)}F(s)}>0$ we have 
\begin{equation}\label{eq-40}
\forall t\in{\mathbf R},~\exists T_t:\forall \tau\ge T_t,~\int_s^\tau e^{(2n-1)\mu^{(2m-1)}F(r)}g(r)dr>0.
\end{equation}
From \eqref{eq-39} and $e^{(2n-1)\mu^{(2m-1)}F(s)}\to+\infty(s\to-\infty)$, for this $T_t$, we have 
\begin{equation}\label{eq-41}
\exists\sigma_t;~\forall t\ge\sigma_t,~\frac{e^{(2n-1)\mu^{(2m-1)}F(s)}}{\left((2n-1)\int_s^t e^{(2n-1)\mu^{(2m-1)}F(r)}g(r)dr\right)^{\frac{1}{(2n-1)}}}>\frac{m_\mu}{2}
\end{equation}
\begin{equation}\label{eq-42}
\frac{e^{(2n-1)\mu^{(2m-1)}F(s)}}{\left((2n-1)\int_{T_t}^t e^{(2n-1)\mu^{(2m-1)}F(r)}g(r)dr+(2n-1)\sup_{\tau\in[s,T_t]}\int_\tau^{T_t} e^{(2n-1)\mu^{(2m-1)}F(r)}g(r)dr\right)^{\frac{1}{(2n-1)}}}>\frac{m_\mu}{2}.
\end{equation}

If for any $\tau\in[s,t]$, 
\begin{equation}\label{eq-43}
x_s^{-(2n-1)}e^{(2n-1)\mu^{(2m-1)}F(s)}+(2n-1)\int_\tau^t g(r) e^{(2n-1)\mu^{(2m-1)}F(r)}dr<0
\end{equation}
then the solution exists. Let $I(\tau,t)=\int_\tau^t g(r) e^{(2n-1)\mu^{(2m-1)}F(r)}dr$.

Now we have the following 3 cases (i), (ii) and (iii).

(i) If $I(\tau,t)<0$, then \eqref{eq-43} is evident.

(ii) If $I(\tau,t)>0$ and $\tau\ge T_t$, then from \eqref{eq-40} we have $I(s,\tau)>0$ and thus we have
\begin{eqnarray*}
&&x_s^{-(2n-1)}e^{(2n-1)\mu^{(2m-1)}F(s)}+(2n-1)I(\tau,t)<\\
&&<x_s^{-(2n-1)}e^{(2n-1)\mu^{(2m-1)}F(s)}+(2n-1)I(s,\tau)+(2n-1)I(\tau,t)=\\
&&=x_s^{-(2n-1)}e^{(2n-1)\mu^{(2m-1)}F(s)}+(2n-1)I(s,t).
\end{eqnarray*}
If we prove that
\begin{equation}\label{eq-44}
|x_s|<\frac{e^{(2n-1)\mu^{(2m-1)}F(s)}}{\left((2n-1)\int_s^t g(r)e^{(2n-1)\mu^{(2m-1)}F(r)}dr\right)^{\frac{1}{(2n-1)}}}
\end{equation}
then we have $x_s^{-(2n-1)}e^{(2n-1)\mu^{(2m-1)}F(s)}+(2n-1)I(s,t)<0$. On the other hand, from \eqref{eq-41} $\frac{m_\mu}{2}$ is a lower bound of the right side of \eqref{eq-44}. 

(iii) If $I(\tau,t)>0$ and $\tau\le T_t$, then we must prove
\begin{equation}\label{eq-45}
|x_s|\le\frac{e^{(2n-1)\mu^{(2m-1)}F(s)}}{\left((2n-1)\int_{T_t}^t e^{(2n-1)\mu^{(2m-1)}F(r)}g(r)dr+(2n-1)\int_\tau^{T_t} e^{(2n-1)\mu^{(2m-1)}F(r)}g(r)dr\right)^{\frac{1}{(2n-1)}}}.
\end{equation}
From \eqref{eq-42} the right side of the above expression is bounded by $\frac{m_\mu}{2}$ from bellow. Therefore zero solution is locally forward attracting in ${\mathbf R}$.

In the case of $\mu=0$, zero solution is locally forward attracting. 

Now we prove that the orbit $x_\mu(t)$ is locally forward attracting when $\mu>0$.
\begin{eqnarray*}
&&\frac{1}{x^{(2n-1)}(t)}-\frac{1}{x_\mu^{(2n-1)}(t)}=e^{(2n-1)\mu^{(2m-1)}F(r)}\left[x_s^{-(2n-1)}e^{(2n-1)\mu^{(2m-1)}F(s)}+\right.
\\
&&\left.+(2n-1)\int_s^t g(r) e^{(2n-1)\mu^{(2m-1)}F(r)}dr-(2n-1)\int_{-\infty}^t g(r) e^{(2n-1)\mu^{(2m-1)}F(r)}dr\right]=
\\
&&=e^{(2n-1)\mu^{(2m-1)}[F(s)-F(t)]}\left[x_s^{-(2n-1)}-(2n-1)e^{(2n-1)\mu^{(2m-1)}F(s)}\int_{-\infty}^s g(r) e^{(2n-1)\mu^{(2m-1)}F(r)}dr\right]
\\
&&=e^{(2n-1)\mu^{(2m-1)}[F(s)-F(t)]}\left[\frac{1}{x_s^{(2n-1)}}-\frac{1}{x_\mu^{(2n-1)}(s)}\right].
\end{eqnarray*}
Therefore we have
$$
\frac{x_\mu^{(2n-1)}(t)-x^{(2n-1)}(t)}{x_\mu^{(2n-1)}(t)x^{(2n-1)}(t)}=\frac{e^{(2n-1)\mu^{(2m-1)}F(s)}}{e^{(2n-1)\mu^{(2m-1)}F(t)}}\cdot\frac{x_\mu^{(2n-1)}(s)-x_s^{(2n-1)}}{x_\mu^{(2n-1)}(s)x_s^{(2n-1)}}
$$
\begin{equation}\label{eq-46}
\left|x^{(2n-1)}(t)-x_\mu^{(2n-1)}(t)\right|=\frac{x_\mu^{(2n-1)}(t)x^{(2n-1)}(t)}{e^{(2n-1)\mu^{(2m-1)}F(t)}}\cdot\frac{e^{(2n-1)\mu^{(2m-1)}F(s)}}{x_s^{(2n-1)}x_\mu^{(2n-1)}(t)}\left|x_\mu^{(2n-1)}(t)-x_s^{(2n-1)}\right|
\end{equation}
When $x_s>0$, we prove that the solution is bounded as $t\to+\infty$.
\begin{eqnarray*}
&&x^{(2n-1)}(t)=\frac{e^{(2n-1)\mu^{(2m-1)}F(t)}}{x_s^{-(2n-1)}e^{(2n-1)\mu^{(2m-1)}F(s)}+(2n-1)\int_s^t g(r) e^{(2n-1)\mu^{(2m-1)}F(r)}dr}\le
\\
&&\le M_\mu^{(2n-1)}\frac{(2n-1)\int_{-\infty}^t g(r) e^{(2n-1)\mu^{(2m-1)}F(r)}dr}{x_s^{-(2n-1)}e^{(2n-1)\mu^{(2m-1)}F(s)}+(2n-1)\int_s^t g(r) e^{(2n-1)\mu^{(2m-1)}F(r)}dr}
\\
&&=M_\mu^{(2n-1)}\frac{(2n-1)\int_{-\infty}^s g(r) e^{(2n-1)\mu^{(2m-1)}F(r)}dr+(2n-1)\int_s^t g(r) e^{(2n-1)\mu^{(2m-1)}F(r)}dr}{x_s^{-(2n-1)}e^{(2n-1)\mu^{(2m-1)}F(s)}+(2n-1)\int_s^t g(r) e^{(2n-1)\mu^{(2m-1)}F(r)}dr}
\end{eqnarray*}

From \eqref{eq20}, for sufficiently large $t$, the second terms in numerator and the denominator are positive. Since
$$
\frac{(2n-1)\int_{-\infty}^s g(r) e^{(2n-1)\mu^{(2m-1)}F(r)}dr}{x_s^{-(2n-1)}e^{(2n-1)\mu^{(2m-1)}F(s)}}=\frac{x_s^{(2n-1)}}{x_\mu^{(2n-1)}(s)},
$$
we have
$$
\lim_{t\to+\infty}\sup x^{(2n-1)}(t)\le M_\mu^{(2n-1)}\max\left(1,\frac{x_s^{(2n-1)}}{x_\mu^{(2n-1)}(s)}\right).
$$
Therefore from \eqref{eq-46}, $x_\mu(t)$ is locally forward attracting.

In \eqref{eq21} we use transformation $\mu\to-\mu,~x\to-x,~t\to-t$, then we have \eqref{eq19}. 
Therefore using the above argument, we conclude that $x_\mu(t)$ is asymptotically unstable and locally pullback repelling. 
\end{proof}

Now we consider general equations 
\begin{equation}\label{eq-47}
\dot{x} = G(t, x, \mu).
\end{equation}
Assume that $G$ is sufficiently smooth. Then we obtain the following Taylor expansion of $G$ at $(t, 0, 0)$ as the above.
\begin{eqnarray*}
&& G(t, x, \mu) = G(t, 0, 0) + G_x(t, 0, 0)x + G_{\mu}(t, 0, 0)\mu + \frac{1}{2}G_{xx}(t, 0, 0)x^2 \\
&& \qquad + G_{x\mu}(t, 0, 0)x\mu + \frac{1}{2}G_{\mu\mu}(t, 0, 0)\mu^2 + \frac{1}{6}G_{xxx}(t, 0, 0)x^3 + \frac{1}{2}G_{xx\mu}(t, 0, 0)x^2\mu\\
&& \qquad + \frac{1}{2}G_{x\mu\mu}(t, 0, 0)x\mu^2 + \frac{1}{6}G_{\mu\mu\mu}(t, 0, 0)\mu^3 + \cdots + \frac{1}{(2m-1)!} \\
&& \qquad \times \left[ \frac{\partial^{2m-1}}{\partial x^{2m-1}}G(t, 0, 0)x^{2m-1} +C_{2m-1}^1 \frac{\partial^{2m-1}}{\partial x^{2m-2} \partial\mu}G(t, 0, 0)x^{2m-2}\mu + \cdots \right.\\
&& \qquad  \left.+ C_{2m-1}^{2m-2} \frac{\partial^{2m-1}}{\partial x \partial\mu^{2m-2}}G(t, 0, 0)x\mu^{2m-2} + \frac{\partial^{2m-1}}{\partial \mu^{2m-1}}G(t, 0, 0)\mu^{2m-1} \right] \\
&& \qquad + \frac{1}{(2m)!}\left[ \frac{\partial^{2m}}{\partial x^{2m}}G(t, 0, 0)x^{2m} +  C_{2m}^1 \frac{\partial^{2m}}{\partial x^{2m-1} \partial\mu}G(t, 0, 0)x^{2m-1}\mu + \cdots \right.\\
&& \qquad \left. + C_{2m}^{2m-1} \frac{\partial^{2m}}{\partial x \partial\mu^{2m-1}}G(t, 0, 0)x\mu^{2m-1}+\frac{\partial^{2m}}{\partial\mu^{2m}}G(t, 0, 0)\mu^{2m} \right] + \frac{1}{(2m+1)!}\\
&& \qquad \times \left[ \cdots + C_{2m+1}^{2m} \frac{\partial^{2m+1}}{\partial x \partial\mu^{2m}}G(t, 0, 0)x\mu^{2m}+\frac{\partial^{2m+1}}{\partial\mu^{2m+1}}G(t, 0, 0)\mu^{2m+1} \right].
\end{eqnarray*}
Here $m \in \mathbf{N}$.

Now assume that $G$ satisfies the following conditions:
\begin{eqnarray*}
&& \textnormal{(i)} ~ G(t, 0, \mu) = 0, ~ \forall t, \mu \in \mathbf{R}, \\
&& \textnormal{(ii)} ~ G_x(t, 0, 0) = 0, \\
&& \textnormal{(iii)} ~ \frac{\partial^2}{\partial x\partial \mu} G(t, 0, 0) = \frac{\partial^3}{\partial x\partial \mu^2} G(t, 0, 0) = \cdots = \frac{\partial^{2m-1}}{\partial x \partial \mu^{2m-2}}G(t, 0, 0) = 0. \qquad \qquad \qquad \qquad
\end{eqnarray*}
From (i) and (ii) we have $ \frac{\partial^k}{\partial \mu^k} G(t, 0, 0) = 0, ~ \forall t \in \mathbf{R}, \forall k \in \mathbf{Z}_+$
and thus $G$ is provided as follows:
\begin{eqnarray*}
&& G(t, x, \mu) = \mu^{2m-1} \left[\frac{1}{(2m)!}C_{2m}^{2m-1} \frac{\partial^{2m}}{\partial x \partial \mu^{2m-1}}G(t, 0, 0) +\frac{1}{(2m+1)!} \right. \\
&& \qquad \left. \times C_{2m+1}^{2m} \frac{\partial^{2m+1}}{\partial x \partial \mu^{2m}}G(t, 0, 0)\mu + \cdots \right]x +\left[ \frac{1}{2}G_{xx}(t, 0, 0) + \frac{1}{6}G_{xxx}(t, 0, 0)x \right. \\
&& \qquad + \frac{1}{2}G_{xx\mu}(t, 0, 0)\mu + \cdots + \frac{1}{(2m-1)!} \frac{\partial^{2m-1}}{\partial x^{2m-1}}G(t, 0, 0)x^{2m-3} + \frac{1}{(2m-1)!}\\
&& \qquad \times C_{2m-1}^1 \frac{\partial^{2m-1}}{\partial x^{2m-2} \partial\mu}G(t, 0, 0)x^{2m-4}\mu + \cdots + \frac{1}{(2m)!} \frac{\partial^{2m}}{\partial x^{2m}}G(t, 0, 0)x^{2m-2}\\
&& \qquad \left. + \frac{1}{(2m)!} C_{2m}^1 \frac{\partial^{2m}}{\partial x^{2m-1} \partial \mu}G(t, 0, 0)x^{2m-3} \mu + \cdots \right]x^2.
\end{eqnarray*}

%
%
\begin{thm}\label{theorem5}
Denote $f(t):=\frac{1}{(2m)!}C_{2m}^{2m-1} \frac{\partial^{2m}}{\partial x \partial \mu^{2m-1}}G(t, 0, 0), ~ g(t):=-\frac{1}{2}G_{xx}(t, 0, 0)$.
Then \eqref{eq-47} can be rewritten as follows:
\begin{equation}\label{eq-48}
\dot{x} = \mu^{2m-1} [f(t)+\mu\phi(t, \mu)]x - [g(t)+r(t, x, \mu)]x^2. 
\end{equation}
Here $\phi(t, 0)=\frac{1}{(2m+1)!}C_{2m+1}^{2m} \frac{\partial^{2m+1}}{\partial x \partial \mu^{2m}}G(t, 0, 0)$. Assume that
\begin{equation}
r(t, 0, 0) = 0, \label{eq23}
\end{equation}
\begin{equation}
\lim_{t \to \pm\infty}\textnormal{inf} ~ g(t)>0, \label{eq24}
\end{equation}
\begin{equation}
0<m = \lim_{t \to \pm\infty}\textnormal{inf} ~ \frac{f(t)}{g(t)} \leq  \lim_{t \to \pm\infty}\textnormal{sup} ~ \frac{f(t)}{g(t)}=M<+\infty, \label{eq25}
\end{equation}
and there exists a positive valued function $h(t)$ such that
\begin{equation}
\vert \phi(t, \mu)\vert \leq h(t), ~~ \vert r_{\mu}(t, x, \mu)\vert \leq h(t), ~~\vert r_x(t, x, \mu)\vert \leq h(t), \label{eq26}
\end{equation}
 \begin{equation}
\lim_{t \to \pm\infty}\textnormal{sup} ~ \frac{h(t)}{g(t)} \leq k. \label{eq27}
\end{equation}
Then there occurs local transcritical bifurcation when $\mu$ passes through 0.
Furthermore, when $\mu<0$, a complete orbit $x_{\mu}(t)$ is pullback repelling in $(-\varepsilon, 0)$;
when $\mu=0$, the zero solution is locally forwards attracting in $\mathbf{R}^+$ and when $\mu>0$,
pullback attracting orbit $x_{\mu}(t)$ is forwards attracting in $(0, \varepsilon)$.
\end{thm}
The idea of the proof is just the same with the theorem \ref{theorem4} and so we omit it.\\

Now assume that G satisfies the following conditions:
\begin{eqnarray*}
&& \textnormal{(iv)} ~ G(t, x, \mu) = c(\mu) \cdot G(t, x, 0) + x \cdot \frac{\partial}{\partial x}G(0, 0, \mu), \\
&& \textnormal{(v)} ~ G(t, 0, 0) = 0, \\
&& \textnormal{(vi)} ~ \frac{\partial}{\partial x}G(t, 0, 0) = \frac{\partial^2}{\partial x^2}G(t, 0, 0) = \cdots = \frac{\partial^{2n-1}}{\partial x^{2n-1}}G(t, 0, 0)=0, \\
&& \textnormal{(vii)} ~ \frac{\partial^2}{\partial x \partial\mu}G(t, 0, 0) = \frac{\partial^3}{\partial x \partial\mu^2}G(t, 0, 0) = \cdots = \frac{\partial^{2m-1}}{\partial x \partial \mu^{2m-2}}G(t, 0, 0)=0. \qquad \qquad \qquad \qquad
\end{eqnarray*}
Here $n, m \in \mathbf{N}$. Then $G$ is provided as follows:
\begin{eqnarray*}
&& G(t, x, \mu) =  \mu^{2m-1} \left[ \frac{1}{(2m)!} C_{2m}^{2m-1} \frac{\partial^{2m}}{\partial x \partial\mu^{2m-1}} G(t, 0, 0)+\frac{1}{(2m+1)!} \right. \\
&& \qquad \left. \times C_{2m+1}^{2m} \frac{\partial^{2m+1}}{\partial x \partial\mu^{2m}} G(t, 0, 0) \mu +\cdots \right]x + \left[ \frac{1}{2}x^{-(2n-2)} \mu \frac{\partial^3}{\partial x^2 \partial\mu} G(t, 0, 0) \right. \\
&& \qquad + \cdots + \frac{1}{(2m-2)!} C_{2m-2}^{1} x^{-(2n-2m+3)}\mu \frac{\partial^{2m-2}}{\partial x^{2m-3} \partial\mu} G(t, 0, 0) \\
&& \qquad + \cdots + \frac{1}{(2n)!} \frac{\partial^{2n}}{\partial x^{2n}}G(t, 0, 0) + \frac{1}{(2n)!}C_{2n}^1 x^{-1}\mu \frac{\partial^{2n}}{\partial x^{2n-1} \partial\mu}G(t, 0, 0) \\
&& \qquad \left. + \frac{1}{(2n)!}C_{2n}^2 x^{-2}\mu^2 \frac{\partial^{2n}}{\partial x^{2n-2} \partial\mu^2}G(t, 0, 0)+\cdots \right]x^{2n}, \qquad m, n \in \mathbf{N}.
\end{eqnarray*}

%
%
\begin{thm}\label{theorem6}
Denote
\begin{eqnarray*}
&& f(t):=\frac{1}{(2m)!}C_{2m}^{2m-1}\frac{\partial^{2m}}{\partial x \partial \mu^{2m-1}}G(t, 0, 0), \\
&& g(t):=-\frac{1}{(2n)!}\frac{\partial^{2n}}{\partial x \partial \mu^{2n-1}}G(t, 0, 0).
\end{eqnarray*}
Then we can rewritten \eqref{eq-47} as follows:
\begin{equation*}
\dot{x} = \mu^{2m-1}[f(t)+\mu\phi(t, \mu)] x - [g(t)+r(t, x, \mu)]x^{2n}.
\end{equation*}
Here $\phi(t, 0) = \frac{1}{(2m+1)!}C_{2m+1}^{2m}\frac{\partial^{2m+1}}{\partial x \partial \mu^{2m}}G(t, 0, 0)$. Assume that
\begin{equation*}
r(t, 0, 0) = 0, ~~ \lim_{t \to \pm\infty}\textnormal{inf} ~ g(t)>0,
\end{equation*}
\begin{equation*}
0<m = \lim_{t \to \pm\infty}\textnormal{inf} ~ \frac{f(t)}{g(t)} \leq  \lim_{t \to \pm\infty}\textnormal{sup} ~ \frac{f(t)}{g(t)}=M<+\infty,
\end{equation*}
\begin{equation*}
\vert \phi(t, \mu)\vert \leq h(t), ~~ \vert r(t, x, \mu)\vert \leq h(t) \left[\vert x\vert^{-(2n-2)} + \vert \mu \vert \right],
\end{equation*}
 \begin{equation*}
\lim_{t \to \pm\infty}\textnormal{sup} ~ \frac{h(t)}{g(t)} \leq k.
\end{equation*}
Then we have the same conclusions with the Theorem \ref{theorem5}.
\end{thm}
The proof is omitted.\\

\textbf{Example 2}. In the equation $\dot{x} = \mu^3t^2x-2t^2x^6$, transcritical bifurcation occurs when $\mu = 0$.

\subsection{Pitchfork bifurcations}

\indent

\begin{thm}\label{theorem7}
Consider the autonomous differential equation 
\begin{equation}\label{eq-59}
\dot{y}=H(t,y,\mu)
\end{equation}
Assume that the equation \eqref{eq-59} is invariant under the transformation $y\to-y$($H$ is odd function of $y$). Take the transformation $x=y^2$ to \eqref{eq-59}, then we have 
\begin{equation}\label{eq-60}
\dot{x}=G(t,x,\mu)=2yH(t,y,\mu)
\end{equation}
Assume that $G$ in \eqref{eq-60} satisfies all conditions of theorem \ref{theorem6}. Let
$$
f(t):=\frac{1}{2}\frac{1}{(2m)!}C_{2m}^{2m-1} \frac{\partial^{2m}}{\partial x \partial \mu^{2m-1}}G(t, 0, 0)
$$ 
$$
g(t):=-\frac{1}{2}\frac{1}{2}G_{xx}(t, 0, 0).
$$
All conditions of theorem \ref{theorem6} except for the limit conditions (when $t\to+\infty$) are satisfy. 
Then there occurs local pitchfork bifurcation in \eqref{eq-59} when $\mu=0$. 

On the other hand, when \eqref{eq-60} does not satisfies the conditions of theorem \ref{theorem6}, we assume that the equation \eqref{eq-60} is invariant under the transformation $x\to-x$. Take the transformation $x=z^2$ to \eqref{eq-60}, then we have 
\begin{equation}\label{eq-61}
\dot{z}=I(t,z,\mu)=2xG(t,x,\mu)
\end{equation}
Assume that \eqref{eq-61} satisfies all conditions of theorem \ref{theorem6}. Let
$$
f(t):=\frac{1}{4}\frac{1}{(2m)!}C_{2m}^{2m-1} \frac{\partial^{2m}}{\partial x \partial \mu^{2m-1}}G(t, 0, 0)
$$ 
$$
g(t):=-\frac{1}{4}\frac{1}{2}G_{xx}(t, 0, 0).
$$
All conditions of theorem \ref{theorem6} except for the limit conditions (when $t\to+\infty$) are satisfy. 
Then there occurs local pitchfork bifurcation in \eqref{eq-59} when $\mu=0$ .
\end{thm}
\begin{proof}
In fact, applying theorem \ref{theorem6}, when $-\mu_0<\mu<0$, zero solution to \eqref{eq-60} is locally pullback attracting in $(-\varepsilon,\varepsilon)$, and therefore zero solution to \eqref{eq-59} is also locally pullback attracting in $(-\varepsilon^\prime,\varepsilon^\prime)$. When $\mu=0$, zero solution to \eqref{eq-60} is asymptotically unstable and locally pullback attracting in $(0,\varepsilon)$, and so is zero solution to \eqref{eq-59}. When $0<\mu<\mu_0$, zero solution to \eqref{eq-60} is asymptotically unstable, its positive orbit $x_\mu(t)$ is locally pullback attracting in $(0,\varepsilon)$ and $x_\mu(t)\to 0(\mu\to 0)$, therefore zero solution to \eqref{eq-59} is asymptotically unstable and the orbits $y_\mu^\pm=\pm\sqrt{x_\mu(t)}$ of \eqref{eq-59} are pullback attracting in $(0,\varepsilon^\prime),~(-\varepsilon^\prime,0)$, respectively. And we have $y_\mu^\pm(t)\to 0(\mu\to 0)$. Therefore in \eqref{eq-59} there occurs local pitchfork bifurcation when $\mu=0$.

On the other hand, applying theorem \ref{theorem6} to \eqref{eq-61}, then in the case of $-\mu_0<\mu<0$  zero solution to \eqref{eq-61} is locally pullback attracting in $(-\varepsilon,\varepsilon)$ and thus zero solution to \eqref{eq-61} is locally pullback attracting in $(-\varepsilon^\prime,\varepsilon^\prime)$. For the same reason, zero solution to \eqref{eq-59} is also locally pullback attracting in $(-\varepsilon^{\prime\prime},\varepsilon^{\prime\prime})$.

In the case of $\mu=0$, zero solution to \eqref{eq-61} is asymptotically unstable and is locally pullback attracting in $(0,\varepsilon)$. Therefore so is zero solution to \eqref{eq-60}. Thus zero solution to \eqref{eq-59} is asymptotically unstable and is locally pullback attracting in $(0,\varepsilon^{\prime\prime})$.

In the case of $0<\mu<\mu_0$, zero solution to \eqref{eq-61} is asymptotically unstable, its positive orbits $z_\mu(t)$ is pullback attracting in $(0,\varepsilon)$ and $z_\mu(t)\to 0(\mu\to 0)$. Thus zero solution to \eqref{eq-60} is asymptotically unstable and the orbits $x_\mu^\pm=\pm\sqrt{z_\mu(t)}$ of \eqref{eq-60} are pullback attracting in $(0,\varepsilon^{\prime\prime}),~(\varepsilon^{\prime\prime},0)$, respectively. And we have $x_\mu^\pm(t)\to 0(\mu\to 0)$. Therefore in \eqref{eq-60} there occurs local pitchfork bifurcation when $\mu=0$. For the same reason, zero solution to \eqref{eq-59} is also asymptotically unstable and the orbits $y_\mu^\pm=\pm\sqrt{x_\mu(t)}$ of \eqref{eq-59} is locally pullback attracting in $(0,\varepsilon^{\prime\prime}),~(-\varepsilon^{\prime\prime},0)$. And we have $y_\mu^\pm(t)\to 0(\mu\to 0)$. Therefore in \eqref{eq-59} there occurs local pitchfork bifurcation when $\mu=0$. 
\end{proof}

{\bf Example 3.} Consider the autonomous differential equation
\begin{equation}\label{eq-62}
\dot{y}=\mu^3f(t)y-g(t)y^3.
\end{equation}
This equation is invariant under the transformation $y\to-y$. Take the transformation $x=y^2$. Then \eqref{eq-62} is changed to 
\begin{equation}\label{eq-63}
\dot{x}=\mu^3(2f(t))x-(2g(t))x^2.
\end{equation}
Here let $m=2,~n=1,~f(t):=2f(t),~g(t):=2g(t)$ and assume that they satisfy the conditions of theorem \ref{theorem4}. Then in \eqref{eq-63} there occurs local saddle node bifurcation when $\mu=0$. 

Then in \eqref{eq-62} there occurs local pitch fork bifurcation when $\mu=0$. In fact, applying theorem \ref{theorem4}, in the case $-\mu_0<\mu<0$, zero solution to \eqref{eq-63} is locally pullback attracting in ${\mathbf R}$ and therefore zero solution to \eqref{eq-62} is also locally pullback attracting in ${\mathbf R}$. In the case $\mu=0$ zero solution to \eqref{eq-63} is asymptotically unstable and locally pullback attracting in ${\mathbf R}^+$ and therefore so is zero solution to \eqref{eq-62}. In the case of $0<\mu<\mu_0$ zero solution to \eqref{eq-63} is asymptotically unstable, the orbit $x_\mu(t)$ is locally pullback attracting in ${\mathbf R}^+$ and $x_\mu(t)\to 0(\mu\to 0)$. Therefore zero solution to \eqref{eq-62} is asymptotically unstable and the orbits $y_\mu^\pm(t)=\pm\sqrt{x_\mu(t)}$ of \eqref{eq-62} are locally pullback attracting in ${\mathbf R}^+,~{\mathbf R}^-$, respectively. And we have $y_\mu^\pm(t)\to 0(\mu\to 0)$. Thus in \eqref{eq-62} there occurs local pitchfork bifurcation when $\mu=0$.

{\bf Example 4.} Consider the autonomous differential equation
\begin{equation}\label{eq-64}
\dot{y}=\mu^3f(t)y-g(t)y^5.
\end{equation}
This equation is invariant under the transformation $y\to-y$. Take the transformation $x=y^2$. Then \eqref{eq-64} is changed to 
$$
\dot{x}=\mu^3(2f(t))x-(2g(t))x^3.
$$
Thus \eqref{eq-64} is reduced to example 3 and therefore there occurs local pitchfork bifurcation when $\mu=0$ . (Let $f(t):=4f(t),~g(t):=4g(t)$ and apply theorem \ref{theorem4}.)

 \end{document}